\newtheorem{theorem}{Theorem}[section]
\newtheorem{proposition}[theorem]{Proposition}
\newtheorem{lemma}[theorem]{Lemma}
\newtheorem{corollary}[theorem]{Corollary}
\theoremstyle{remark}
\theoremstyle{definition}
\newcommand{\C}{\Bbb C}
\newcommand{\tr}{{\mathrm{tr}\,}}
\newcommand{\la}{\langle}
\newcommand{\ra}{\rangle}
\newcommand{\fb}{\mathfrak{b}}
\numberwithin{equation}{section}
\title{On minimality of two-bridge knots}
\author{Fumikazu Nagasato}
\address{Department of Mathematics, 
Meijo University, 
Tempaku, Nagoya, 468-8502, Japan}
\email{fukky@meijo-u.ac.jp}
\author{Masaaki Suzuki}
\address{Department of Frontier Media Science, 
Meiji University, 
4-21-1 Nakano, Nakano-ku, Tokyo, 164-8525, Japan}
\email{macky@fms.meiji.ac.jp}
\author{Anh T. Tran}
\address{Department of Mathematical Sciences, 
The University of Texas at Dallas, Richardson,
TX 75080, USA}
\email{att140830@utdallas.edu}
\begin{document}

\maketitle

\begin{abstract}
A knot is called minimal if its knot group admits epimorphisms 
 onto the knot groups of only the trivial knot and itself. 
In this paper,  we determine which two-bridge knot $\fb(p,q)$ is minimal 
where $q \leq 6$ or $p \leq 100$.
\end{abstract}

\section{Introduction}

Recently, many papers have investigated epimorphisms between knot groups. 
In particular, Simon's conjecture, which was one of our main interests, 
was settled affirmatively in \cite{agolliu}. 
Namely, every knot group maps onto at most finitely many knot groups. 
Then the next problem will be to determine 
the number of knot groups onto which a given knot group maps. 
Silver and Whitten in \cite{silverwhitten} gave a necessary and sufficient condition 
for admitting an epimorphism from the knot group of a torus knot. 
Ohtsuki, Riley, and Sakuma in \cite{ORS} studied 
a systematic construction of epimorphisms between two-bridge knot groups. 
In \cite{kitano-suzuki1} and \cite{HKMS} all the pairs of prime knots with up to $11$ crossings 
which admit meridional epimorphisms between their knot groups were determined. 
However, it is not easy to determine 
whether there exists an epimorphism between knot groups in general. 

A knot is called {\it minimal} if its knot group admits epimorphisms 
 onto the knot groups of only the trivial knot and itself. 
It is already known that infinitely many knots are minimal. 
For example, it is proved in \cite{kitano-suzuki3} that 
all the prime knots with up to $6$ crossings are minimal. 
This result has been extended to two-bridge knots with up to $8$ crossings in \cite{szk}.
Besides, the previous papers \cite{Burde}, \cite{MPV}, \cite{N}, and \cite{NT} 
showed that twist knots and $\fb(p,3)$ are minimal. 

In this paper, we will discuss the minimality of two-bridge knots. 
To be precise, we determine which two-bridge knot $\fb(p,q)$ is minimal 
where $q \leq 6$ or $p \leq 100$, 
in Theorem \ref{thm:mainthmA} and Theorem \ref{thm:mainthmB}. 
For this purpose, 
we show two sufficient conditions on the minimality of a knot. 
One is given by the  $SL_2(\C)$-character variety of a knot 
and the other heavily relies on the irreducibility of the Alexander polynomial. 

%

\section{Some sufficient conditions}\label{sect:conditions}

In this section, 
we show some sufficient conditions on the minimality of a knot. 
First, we consider the nonabelian  $SL_2(\C)$-character variety of a knot, 
which is the non-abelian part of the $SL_2(\C)$-character variety. 

For a knot $K$ in $S^3$, we denote by $G(K)$ its knot group and $X(K)$ its $SL_2(\C)$-character variety. 
For brevity, in the rest of the paper, character varieties always mean $SL_2(\C)$-character varieties. 

\begin{proposition}\label{prop:secondcondition}
If the nonabelian character variety of a hyperbolic knot $K$ is irreducible, 
then $K$ is minimal. 
\end{proposition}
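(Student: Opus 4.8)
The plan is to prove minimality by showing that every epimorphism from $G(K)$ onto a \emph{nontrivial} knot group is forced to be an isomorphism; epimorphisms onto the trivial knot group (i.e.\ onto $\Z$) are always present and permitted by the definition, so they are harmless. Thus I would fix an epimorphism $\varphi\colon G(K)\twoheadrightarrow G(K')$ with $K'$ nontrivial and aim to prove $\ker\varphi=1$. The central device is the pullback map on character varieties, $\rho\mapsto\rho\circ\varphi$. Since $\varphi$ is onto, $\rho\circ\varphi$ is irreducible whenever $\rho$ is, and if two irreducible representations of $G(K')$ have pullbacks with the same character, the pullbacks are conjugate by some $A$, whence $\varphi$ surjective gives that the original representations are conjugate by $A$ as well. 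Hence $\varphi$ induces an \emph{injective} regular map of the nonabelian character varieties $\varphi^{*}\colon X^{\mathrm{na}}(K')\hookrightarrow X^{\mathrm{na}}(K)$.

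Next I would pin down dimensions. As $K$ is hyperbolic, its holonomy provides a discrete faithful representation $\rho_0$ whose character $\chi_0$ lies on the canonical component $X_0$ of the character variety; by Thurston's deformation theory $X_0$ is a curve (its complex dimension equals the number of cusps, which is $1$), and $X_0\subseteq X^{\mathrm{na}}(K)$. The hypothesis that $X^{\mathrm{na}}(K)$ is irreducible then forces $X^{\mathrm{na}}(K)=X_0$, an irreducible affine curve. On the other side, since $K'$ is nontrivial, $X^{\mathrm{na}}(K')$ is nonempty and, by the standard half-lives-half-dies lower bound at an irreducible character, every component through such a character has dimension at least $1$. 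Pushing a component of dimension $\ge 1$ through the injective morphism $\varphi^{*}$ yields a subvariety of the irreducible curve $X_0$ of dimension $\ge 1$, so its Zariski closure is all of $X_0$; by Chevalley's theorem the image of $\varphi^{*}$ is constructible, and being dense in the curve $X_0$ it is cofinite.

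The last and most delicate step exploits faithfulness of $\rho_0$. For any $g\in\ker\varphi$ and any $\chi=\chi_{\rho\circ\varphi}$ in the image of $\varphi^{*}$ one has $\chi(g)=\tr\bigl(\rho(\varphi(g))\bigr)=\tr(I)=2$, so the regular trace function $\tau_g\colon\chi\mapsto\chi(g)$ equals $2$ on a dense subset of $X_0$, hence $\tau_g\equiv 2$ on all of $X_0$; evaluating at $\chi_0$ gives $\tr\rho_0(g)=2$ for \emph{every} $g\in\ker\varphi$. Therefore $\rho_0(\ker\varphi)$ consists entirely of unipotent elements, so by Kolchin's theorem it has a common fixed point $\xi\in\partial\mathbb{H}^{3}$, and if it is nontrivial then $\{\xi\}$ is its \emph{only} fixed point. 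Since $\ker\varphi$ is normal in $G(K)$, the group $\rho_0(\ker\varphi)$ is normalized by the lattice $\Gamma=\rho_0(G(K))$; normality moves the unique fixed point to $\gamma\xi$ for each $\gamma\in\Gamma$, forcing $\gamma\xi=\xi$ for all $\gamma$. This contradicts the fact that a finite-covolume Kleinian group has limit set the whole sphere at infinity and hence no global fixed point. Thus $\ker\varphi=1$, $\varphi$ is an isomorphism, and $K'$ has the same knot group as $K$, which proves minimality.

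I expect the main obstacles to be twofold. First, securing $\dim X^{\mathrm{na}}(K')\ge 1$ for an \emph{arbitrary} nontrivial $K'$ (not just a hyperbolic one), which I would handle by quoting the cohomological lower bound on character-variety dimensions at irreducible representations of a knot group together with nonemptiness of the nonabelian character variety for nontrivial knots. Second, and more essential, is the concluding rigidity argument: the passage from ``$\rho_0(\ker\varphi)$ is unipotent'' to ``$\ker\varphi=1$'' via Kolchin's theorem and normality in a non-elementary lattice, where care is needed to ensure the image of $\varphi^{*}$ is genuinely dense (not merely positive-dimensional) so that the identity $\tau_g\equiv 2$ propagates from the image all the way to the discrete faithful character $\chi_0$.
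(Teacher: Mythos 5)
Your argument is correct and passes through the same intermediate milestones as the paper --- injectivity of the induced map $\varphi^*$ on irreducible characters, the identification of the nonabelian character variety of $K$ with the one-dimensional canonical component $X_0$, and the fact that the image of $\varphi^*$ fills up $X_0$ --- but the two proofs diverge at the final rigidity step. The paper upgrades density to genuine surjectivity $\phi^*(X')=X_0$ via a properness (blow-up) argument: a missing point of $X_0$ would produce a sequence in $X'$ along which some trace function diverges while the image sequence converges, contradicting surjectivity of $\phi$; this places the discrete faithful character itself in the image, so some $\rho'\circ\phi$ is conjugate to $\rho_0$ and faithfulness of $\rho_0$ immediately gives $\ker\phi=1$. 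You instead settle for a dense constructible image and propagate the identity $\tau_g\equiv 2$ (for $g\in\ker\varphi$) to $\chi_{\rho_0}$ by Zariski continuity of trace functions, then kill the kernel with Kolchin's theorem together with normality of $\rho_0(\ker\varphi)$ in the non-elementary lattice $\rho_0(G(K))$. Your route trades the paper's properness argument (its most delicate point) for a piece of Kleinian-group rigidity; both are sound, and yours has the mild advantage of not needing $\varphi^*$ to be a closed map. One small gap remains at the very end: an isomorphism $G(K)\cong G(K')$ does not by itself yield $K'=K$, which is what minimality requires. Since $K$ is hyperbolic (hence prime), the case of prime $K'$ is handled by Gordon--Luecke/Whitten, but composite $K'$ must be excluded separately --- the paper does this by citing Feustel--Whitten --- so you should add a sentence to that effect.
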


The previous paper 
\cite[Theorem 4.4]{BBRW} (see also \cite[Corollary 7.1]{ORS}) shows Proposition 
\ref{prop:secondcondition}. However, its proof would be a little indirect for it. 
In \cite[Appendix B]{N}, we showed a straightforward proof of Proposition 
\ref{prop:secondcondition}. Here we review this proof. 

\begin{proof}[Proof of Proposition \ref{prop:secondcondition}]
Suppose there exists a non-trivial knot $K'$ in $S^3$ such that $K'\neq K$ and 
there exists a surjective group homomorphism $\phi: G(K) \to G(K')$. 
Then $\phi$ induces the injection $\phi^*:X(K') \to X(K)$ between 
the character varieties $X(K')$ and $X(K)$ given by 
$\phi^*(\chi_{\rho'}):=\chi_{\rho \circ \phi}$ (see \cite[Lemma 2.1]{BBRW}). 

Now $X(K)$ is a union of the irreducible components $X^{\rm red}(K)$ 
consisting entirely of the reducible characters and 
the irreducible components $X^{\rm irr}(K)$ including the irreducible characters. 
Since $K$ is hyperbolic, $X^{\rm irr}(K)$ includes the irreducible component $X_0$ 
containing a discrete faithful character $\chi_{\rho_0}$ 
coming from the holonomy representation. Note that $\dim_{\C}(X_0)=1$. 
By the assumption of Proposition \ref{prop:secondcondition}, $X^{\rm irr}(K)$ has only 
a single irreducible component and thus $X^{\rm irr}(K)=X_0$. 
It is obvious that $\phi^*$ maps irreducible (resp. reducible) characters in $X(K')$ 
to irreducible (resp. reducible) characters in $X(K)$. 
This indicates that $X(K')$ must be a union of the irreducible components 
$X^{\rm red}(K')$ consisting entirely of the reducible characters 
and a single (complex) 1-dimensional irreducible component $X'$ 
containing the irreducible characters because $\phi^*$ is an injection. 
(Note that the character variety of a non-trivial knot always has an irreducible character.)
Moreover, $\phi^*(X')=X_0$ as $\phi^*$ is a closed map in Zariski topology, 
which can be shown as follows (see also \cite[Lemma 2.1]{BB}). 
Suppose that the set $C:=X_0-\phi^*(X')$ is not empty. 
Then there exists a point $x_0$ in $C$ and a sequence $\{\chi_{\rho'_n}\}$ in $X'$, 
called {\it blow-up}, such that for some $h \in G(K')$
\[ 
\left|\lim_{n \to \infty}\chi_{\rho'_n}(h)\right|=\infty,\hspace*{1cm}
\lim_{n \to \infty} \phi^*(\chi_{\rho'_n})=x_0.
\]
For example, we can consider a sequence $\{\chi_{\rho_n}\}$ 
in $\phi^*(X')=X_0-C$ with 
\[\lim_{n \to \infty}\chi_{\rho_n}=x_0\] 
because $\dim_{\C}(X_0)=1$. Then taking the preimage of $\chi_{\rho_n}$ 
under $\phi^*$, we can obtain a blow-up $\{\chi_{\rho'_n}\}$.

By definition, the above behavior of $\phi^*(\chi_{\rho'_n})$ at infinity 
can be transformed as follows: 
\[
\left|\lim_{n \to \infty} \chi_{\rho'_n}(\phi(g))\right|=|x_0(g)|, 
\]
for any $g \in G(K)$. Note that since $x_0$ is in $X_0$, 
the right side of the above equation does not diverge for any $g \in G(K)$. 
As $\phi:G(K) \to G(K')$ is surjective, for some $g_0 \in G(K)$ 
we have $\phi(g_0)=h$ and thus the equality 
\[
\left|\lim_{n \to \infty} \chi_{\rho'_n}(h)\right|=|x_0(g_0)|. 
\]
holds. The left side of the equality diverges, 
meanwhile the right side converges, a contradiction. 

Now we have $\phi^*(X')=X_0$. 
In this situation, there must exist an irreducible character $\chi_{\rho'} \in X'$ 
such that $\phi^*(\chi_{\rho'})=\chi_{\rho_0}$, i.e., 
$\chi_{\rho' \circ \phi}=\chi_{\rho_0}$. 
Since $\rho' \circ \phi$ and $\rho_0$ are irreducible, they are conjugate. 
So there exists a matrix $A \in SL_2(\C)$ such that 
$A^{-1} \rho'(\phi(g)) A=\rho_0(g)$ for any $g \in G(K)$. 
Here $\rho_0$ is faithful. Hence $\phi:G(K) \to G(K')$ must be bijective 
and thus a group isomorphism. 

In this situation, $K'$ cannot be a prime knot, 
because if $K'$ is prime then $G(K) \cong G(K')$ means $K=K'$, a contradiction. 
This completes the proof of Proposition \ref{prop:secondcondition} for prime knots.
In addition, $K'$ cannot be a composite knot by \cite[Lemma 2]{FW}. 
\end{proof}

We have another sufficient condition for a knot to be minimal, 
which heavily relies on the Alexander polynomial. 
It is well-known that 
if there exists an epimorphism from the knot group of $K$ onto that of $K'$, 
then the Alexander polynomial $\Delta_K(t)$ of $K$ is divisible by that of $K'$. 
This property gives us a useful criterion to show the non-existence of an epimorphism 
between knot groups. 
Moreover, we can refine this condition on two-bridge knots. 

\begin{proposition}\label{prop:firstcondition}
Let $K$ be a two-bridge knot. 
If the Alexander polynomial of $K$ is irreducible over the integers ${\mathbb Z}$, 
then $K$ is minimal. 
\end{proposition}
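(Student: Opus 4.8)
The plan is to argue by contradiction in the same spirit as the proof of Proposition~\ref{prop:secondcondition}: I assume there is an epimorphism $\phi\colon G(K)\to G(K')$ onto the group of a nontrivial knot $K'\neq K$, and I try to force $\phi$ to be an isomorphism. Once $\phi$ is known to be an isomorphism the argument closes exactly as in Proposition~\ref{prop:secondcondition}, because a two-bridge knot is prime: $G(K)\cong G(K')$ with $K'$ prime yields $K=K'$, while the case of composite $K'$ is excluded by \cite[Lemma 2]{FW}. The whole burden is therefore to promote the surjection $\phi$ to a bijection, and this is where the irreducibility of $\Delta_K(t)$ is meant to enter.

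The first ingredient is the classical divisibility: any epimorphism $G(K)\to G(K')$ forces $\Delta_{K'}(t)$ to divide $\Delta_K(t)$ in $\mathbb{Z}[t,t^{-1}]$. Since $\Delta_K(t)$ is irreducible over $\mathbb{Z}$ and the only units of $\mathbb{Z}[t,t^{-1}]$ are $\pm t^{k}$, the divisor $\Delta_{K'}(t)$ must be either a unit (so $\Delta_{K'}(t)=1$ after normalization) or an associate of $\Delta_K(t)$. I would then use that $K$ is two-bridge in two ways. First, the Alexander module of a two-bridge knot is cyclic, $A_K\cong \mathbb{Z}[t,t^{-1}]/(\Delta_K(t))$, and $\phi$ sends a meridian to a homological meridian, hence respects the infinite cyclic covers and induces a surjection $A_K\twoheadrightarrow A_{K'}$ of $\mathbb{Z}[t,t^{-1}]$-modules. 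Because $A_K$ is cyclic with $\Delta_K$ irreducible, a surjection onto a module whose order is an associate of $\Delta_K(t)$ has trivial kernel (the defining ideal of the image contains $\Delta_K$ and has $\gcd$ associate to $\Delta_K$, so it equals $(\Delta_K)$); thus in the associate case $\phi$ already induces an \emph{isomorphism} of Alexander modules. Second, I would rule out the unit case $\Delta_{K'}(t)=1$: the images of the two meridional generators of $G(K)$ generate $G(K')$, so (using that the epimorphism may be taken meridional, cf.\ \cite{ORS}) $K'$ is itself two-bridge, and a nontrivial two-bridge knot $\fb(p',q')$ has $\Delta_{K'}(-1)=\pm p'$ with $p'\geq 3$, whence $\Delta_{K'}(t)\neq 1$.

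The remaining, and hardest, step is to upgrade ``isomorphism on Alexander modules'' to ``isomorphism of groups.'' Here I would exploit the special structure of two-bridge knot groups: they are two-generator one-relator groups, they are residually finite (hence Hopfian), and their commutator subgroups are controlled by the cyclic Alexander module. Concretely, $\phi$ restricts to a surjection $G(K)'\twoheadrightarrow G(K')'$ that becomes an isomorphism after abelianizing, while $\phi$ is already a bijection on the $\mathbb{Z}$-quotients $G(K)/G(K)'\to G(K')/G(K')'$, and I would try to combine these to deduce injectivity. I expect this last passage to be the main obstacle, since an epimorphism inducing an isomorphism on Alexander modules need not be injective for a general knot; the argument must genuinely use that both $K$ and $K'$ are two-bridge (equivalently, that an epimorphism between two-bridge knot groups with equal Alexander polynomials is an isomorphism). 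I would therefore either invoke the structural results on two-bridge epimorphisms in \cite{ORS} and \cite{BBRW} to conclude $K'=K$ directly from $p'=p$ and $\Delta_{K'}\doteq\Delta_K$, or argue from the Hopfian property once the two ends of $\phi$ have been matched.
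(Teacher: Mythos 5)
There is a genuine gap, and you in fact flag it yourself: the passage from ``$\phi$ induces an isomorphism of Alexander modules'' to ``$\phi$ is an isomorphism of groups'' is never carried out, and no combination of Hopficity, two-generator presentations, or residual finiteness that you sketch will deliver it (an epimorphism between knot groups inducing an isomorphism on Alexander modules need not be injective, and your fallback of concluding $K'=K$ from $p'=p$ and $\Delta_{K'}\doteq\Delta_K$ also fails, since distinct two-bridge knots can share both $p$ and the Alexander polynomial). The irony is that you already hold every ingredient needed to finish in one line, and the whole ``upgrade to a group isomorphism'' program is unnecessary. The paper's proof never shows $\phi$ is injective; it derives a numerical contradiction instead.

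Concretely: Theorem~\ref{thm:bb} (Boileau--Boyer) gives not only that $K'$ is a two-bridge knot $\fb(p',q')$ but that $p=kp'$ with $k>1$, hence $p'<p$ \emph{strictly}. You correctly note that divisibility plus irreducibility forces $\Delta_{K'}$ to be a unit or an associate of $\Delta_K$, and you correctly rule out the unit case via $|\Delta_{\fb(p',q')}(-1)|=p'\geq 3$. Now apply that same determinant formula to the associate case: $\Delta_{K'}\doteq\Delta_{K}$ would force $p'=|\Delta_{K'}(-1)|=|\Delta_{K}(-1)|=p$, contradicting $p'<p$. That is the entire proof. Your Alexander-module analysis and the appeal to Proposition~\ref{prop:secondcondition}'s endgame (primeness, \cite[Lemma 2]{FW}) are not needed, and as written your argument does not close.
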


A two-bridge knot can be represented by a fraction $q/p$. 
Then we denote by $\fb(p,q)$ the two-bridge knot, where $p$ and $q$ are coprime 
(see \cite{bzh}, \cite{murasugi} for details). 
Using this presentation, 
Schubert classified two-bridge knots as follows. 

\begin{theorem}[Schubert] 
Let $\fb(p,q)$ and $\fb(p',q')$ be two-bridge knots. 
These knots are equivalent if and only if 
the following conditions hold. 
\begin{enumerate}
\item[(1)] $p = p'$. 
\item[(2)] Either $q \equiv \pm q' \pmod{p}$ or $q q' \equiv \pm 1 \pmod{p}$. 
\end{enumerate}
\end{theorem}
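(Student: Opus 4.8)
The plan is to reduce this classification to the classification of lens spaces by passing to the double branched cover. Recall that the two-fold cover of $S^3$ branched along $\fb(p,q)$ is the lens space $L(p,q)$; this can be read off directly from the rational-tangle (4-plat) description of $\fb(p,q)$, since the double cover of a rational tangle is a solid torus, and gluing the two resulting solid tori along the branch data produces precisely the genus-one Heegaard splitting of $L(p,q)$. I would establish this identification first, fixing conventions so that the fraction $q/p$ describing the tangle matches the gluing slope of the lens space.

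For the ``only if'' direction, suppose $\fb(p,q)$ and $\fb(p',q')$ are equivalent, i.e.\ there is a homeomorphism of pairs $(S^3,\fb(p,q)) \to (S^3,\fb(p',q'))$, allowed to reverse orientation, which is exactly what produces the sign ambiguities. Such a homeomorphism lifts to a homeomorphism of the branched double covers, so $L(p,q) \cong L(p',q')$. I would then invoke the classification of lens spaces: $L(p,q) \cong L(p',q')$ as $3$-manifolds if and only if $p=p'$ and $q' \equiv \pm q^{\pm 1} \pmod{p}$. Unwinding the four sign-and-inverse choices $q' \equiv q,\ -q,\ q^{-1},\ -q^{-1} \pmod p$ gives exactly conditions (1) and (2).

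For the ``if'' direction, I would realize each of the four number-theoretic moves by an explicit symmetry of the $2$-bridge diagram: $q' \equiv q$ is the identity; $q' \equiv q^{-1} \pmod{p}$ is realized by the $180^\circ$ rotation of the bridge sphere interchanging the two bridges, which amounts to reading the associated continued fraction backwards and inverts $q$ modulo $p$; and the two remaining cases $q' \equiv -q,\ -q^{-1}$ are obtained by further composing with a reflection of $S^3$. Producing these isotopies is elementary and completes the equivalence. Alternatively, the ``if'' direction follows from the same lens-space computation run in reverse together with the fact that the branched-covering involution on a lens space is unique up to conjugacy, so that $L(p,q)\cong L(p',q')$ already forces the knots to agree.

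The main obstacle is the lens-space classification invoked in the ``only if'' direction: homology and even homotopy type do not suffice to distinguish lens spaces (for instance $L(7,1)$ and $L(7,2)$ are homotopy equivalent but not homeomorphic), so one genuinely needs Reidemeister--Franz torsion (or, for the orientation-preserving part, the $\Q/\Z$-valued linking form) to detect the invariant $q^{\pm 1} \bmod p$. A secondary technical point is bookkeeping the orientation conventions carefully enough to see that permitting orientation-reversing equivalences is precisely what introduces the $-q$ alternatives, matching the $\pm$ appearing in condition (2).
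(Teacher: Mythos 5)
The paper offers no proof of this statement to compare against: it is quoted as Schubert's classical theorem, with \cite{bzh} and \cite{murasugi} cited for details. Judged on its own merits, your outline is a correct sketch of the standard modern proof. The identification of the double branched cover of $S^3$ along $\fb(p,q)$ with $L(p,q)$ via the two rational tangles is right; an equivalence of pairs lifts to a homeomorphism of branched covers; and the homeomorphism classification of lens spaces ($L(p,q)\cong L(p',q')$ if and only if $p=p'$ and $q'\equiv \pm q^{\pm 1} \pmod{p}$) then gives the ``only if'' direction, while the ``if'' direction is handled by the elementary plat symmetries: reversing the continued fraction (rotation of the bridge sphere) realizes $q\mapsto q^{-1} \bmod p$, and mirror reflection realizes $q\mapsto -q$. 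You also correctly flag the two delicate points: the lens space classification genuinely requires Reidemeister--Franz torsion, and the $\pm$ signs in condition (2) force ``equivalent'' to be read as allowing orientation-reversing homeomorphisms of $S^3$; with orientation-preserving equivalence only $q'\equiv q^{\pm 1}$ survives. Two remarks. First, your ``alternative'' route to the ``if'' direction, via uniqueness up to conjugacy of the covering involution on a lens space, is the Hodgson--Rubinstein theorem, which is far deeper than the explicit isotopies and should either be cited as such or dropped; as written it reads as if it were a routine fact. Second, be aware that this branched-cover argument, while standard in modern textbooks, is not Schubert's original proof, which was a direct geometric analysis of bridge presentations; since the paper treats the theorem as a black box, either route is a legitimate way to fill in the citation.
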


In order to prove Proposition \ref{prop:firstcondition}, 
we recall the following remarkable result. 

\begin{theorem}[Boileau-Boyer \cite{BB}]\label{thm:bb}
If there exists an epimorphism from $G(\fb(p,q))$ onto another knot group $G(K)$, 
then $K$ is the trivial knot or also a two-bridge knot $\fb(p',q')$, 
where $p = k p'$ and $k > 1$. 
\end{theorem}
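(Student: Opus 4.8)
The plan is to push the whole argument through the double branched cover and the associated order-two orbifold, using the theorem of Hodgson and Rubinstein that a knot is two-bridge precisely when its double branched cover is a lens space.

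First I would extract the homological constraints forced by an epimorphism $\phi\colon G(\fb(p,q))\twoheadrightarrow G(K)$. Both groups abelianize to $\Z$, and a surjection $\Z\to\Z$ is an isomorphism, so $\phi$ induces an isomorphism on first homology and the two reductions modulo $2$ agree; hence $\phi$ carries the index-two subgroup of $G(\fb(p,q))$ determined by the mod-two linking number onto the corresponding index-two subgroup of $G(K)$. Since $G(\fb(p,q))$ is generated by two conjugate meridians $\mu_1,\mu_2$, their images generate $G(K)$, remain conjugate, and each maps to a generator of $H_1(G(K))\cong\Z$. Thus $G(K)$ is generated by two conjugate elements that are homological meridians.

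The geometric core, and the main obstacle, is to convert this two-element generating set into the statement that $K$ is trivial or two-bridge. I would work with the order-two orbifold $O$ whose singular locus is the knot. For $\fb(p,q)$ this orbifold is spherical: its orbifold fundamental group is the dihedral group of order $2p$, fitting in $1\to\Z/p\Z\to\pi_1^{\mathrm{orb}}(\fb(p,q))\to\Z/2\Z\to 1$ with kernel $\pi_1$ of the lens space $\Sigma_2(\fb(p,q))=L(p,q)$. The aim is to descend $\phi$ to a surjection of $\pi$-orbifold groups $\pi_1^{\mathrm{orb}}(\fb(p,q))\twoheadrightarrow\pi_1^{\mathrm{orb}}(K)$ and to conclude that $\Sigma_2(K)$ is again a lens space. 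The difficulty is precisely that the meridian image $\phi(\mu_1)$ need not a priori be a meridian of $K$, so the quotient by squares of meridians need not be respected; controlling this requires genuinely geometric input. Here I would invoke the smallness of two-bridge knot exteriors (Hatcher-Thurston), so that the source carries no hidden closed essential surface, together with the orbifold geometrization theorem, to show that $\phi(\mu_1)$ can be taken meridional and that the descended map is defined. This step is where essentially all of the content lives.

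Granting that $K$ is trivial or a two-bridge knot $\fb(p',q')$, I would finish with a homology count on the branched covers. Restricting the descended map to the kernels of the reductions modulo $2$ gives a surjection $\Z/p\Z=\pi_1(\Sigma_2(\fb(p,q)))\twoheadrightarrow\pi_1(\Sigma_2(\fb(p',q')))=\Z/p'\Z$, which forces $p'\mid p$; write $p=kp'$. Finally $k>1$: if $k=1$ then $p=p'$, and the induced surjection $\Z/p\Z\to\Z/p\Z$ is an isomorphism. I would then rule this case out by showing that an epimorphism between two-bridge knot groups of equal determinant is necessarily an isomorphism, so that $K=\fb(p,q)$ by Schubert's classification together with Gordon-Luecke, contradicting the hypothesis that $G(K)$ is another knot group. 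Hence $p=kp'$ with $k>1$, as claimed.
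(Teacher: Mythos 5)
This statement is not proved in the paper at all: it is imported verbatim from Boileau--Boyer \cite{BB} (with \cite{BBRW} behind the ``trivial or two-bridge'' part), so your attempt must be judged as a proof of that theorem from scratch, and as such it has a genuine gap. The skeleton you set up is the right one, and it matches the known strategy in outline: the $\pi$-orbifold group of $\fb(p,q)$ is dihedral of order $2p$, a descent of $\phi$ to orbifold groups would force $\pi_1^{\mathrm{orb}}(K)$ to be a finite dihedral quotient, geometrization plus Hodgson--Rubinstein would then make $\Sigma_2(K)$ a lens space and $K$ two-bridge, and restricting to the index-two subgroups gives $\Z/p\twoheadrightarrow\Z/p'$, hence $p'\mid p$. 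But the entire theorem is concentrated in the step you explicitly defer: showing that $\phi(\mu_1)$ ``can be taken meridional'' so that $\phi$ kills the normal closure of the squares of meridians and descends to $\pi_1^{\mathrm{orb}}$. Invoking Hatcher--Thurston smallness together with the orbifold geometrization theorem does not produce this: smallness is a statement about closed essential surfaces in the source exterior, and geometrization applies to the target orbifold only \emph{after} you know its orbifold group is a quotient of the dihedral group; neither result says anything about where $\phi$ sends a meridian, or even about the image of the peripheral subgroup. Controlling that image is exactly the analytic core of \cite{BBRW} and \cite{BB}: it needs the induced injection $\phi^*$ of character varieties, a case analysis of $\ker\bigl(\phi|_{\pi_1(\partial E)}\bigr)$, and Culler--Shalen theory at ideal points of the curve $\phi^*\bigl(X_0(K')\bigr)$. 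Your own remark that ``this step is where essentially all of the content lives'' is an accurate self-diagnosis that the proof is missing, not a proof.

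There is a second, smaller gap in your treatment of $k>1$. You reduce it to the claim that an epimorphism between two-bridge knot groups with equal $p$ is an isomorphism, but you give no argument: Hopficity does not apply (the two groups are not known to be isomorphic beforehand, only to have equal determinant), and your dihedral/branched-cover bookkeeping cannot help, since $D_p$ and $\Z/p$ do not distinguish $\fb(p,q)$ from $\fb(p,q')$. In \cite{BB} this is precisely where the nonzero-degree-map machinery is needed. Note also that for the bare divisibility $p'\mid p$ you do not need branched covers at all: an epimorphism of knot groups forces $\Delta_{K'}\mid\Delta_K$ in $\Z[t^{\pm1}]$, and evaluating at $t=-1$ gives $p=p'\,|Q(-1)|$ for the complementary factor $Q$, hence $p'\mid p$ --- this is essentially the argument the paper itself uses for Proposition 2.2; but this cheap route likewise cannot exclude $|Q(-1)|=1$, i.e.\ cannot yield $k>1$.
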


\begin{proof}[Proof of Proposition \ref{prop:firstcondition}]
Suppose that the knot group of a two-bridge knot $\fb(p,q)$ 
admits an epimorphism onto the knot group of another non-trivial knot $K$.  
By Theorem \ref{thm:bb}, 
$K$ is a two-bridge knot $\fb(p',q')$, 
where, in particular, $p'$ is less than $p$. 
It is easy to see that 
if $G(\fb(p,q))$ admits an epimorphism onto $G(K)$, 
then the Alexander polynomial of $\fb(p,q)$ is divisible by that of $K$. 
However, by assumption, the Alexander polynomial of $\fb(p,q)$ is irreducible over ${\mathbb Z}$. 
Therefore the Alexander polynomial of $K$ is the same as that of $\fb(p,q)$. 
Note that the Alexander polynomial of a two-bridge knot is not trivial. 
On the other hand, it is known that the determinant $|\Delta_{\fb(p,q)}(-1)|$ 
of a two-bridge knot $\fb(p,q)$ is $p$. 
This implies the Alexander polynomials of $\fb(p,q)$ and $K$ are not the same. 
This is a contradiction. 
\end{proof}

\begin{corollary}\label{cor:alexpolyofdeg2}
If the degree of the Alexander polynomial of a two-bridge knot $K$ is $2$, 
then $K$ is minimal. 
\end{corollary}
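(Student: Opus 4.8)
The plan is to imitate the proof of Proposition \ref{prop:firstcondition}, but to replace the use of irreducibility by a direct comparison of degrees. One should be aware that a degree-$2$ Alexander polynomial of a two-bridge knot need not be irreducible over $\Z$: for instance $6_1=\fb(9,2)$ has $\Delta(t)=2t^2-5t+2=(2t-1)(t-2)$. Thus Proposition \ref{prop:firstcondition} cannot be applied verbatim, and a small refinement is required.

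First I would argue by contradiction, assuming that $G(K)$ admits an epimorphism onto $G(K')$ for some nontrivial knot $K'$ different from $K=\fb(p,q)$. By Theorem \ref{thm:bb}, $K'$ is again a two-bridge knot $\fb(p',q')$ with $p=kp'$ and $k>1$, so that $p'<p$. As in the proof of Proposition \ref{prop:firstcondition}, the epimorphism forces the Alexander polynomial $\Delta_{K'}(t)$ to divide $\Delta_K(t)$. Normalizing both as ordinary integer polynomials with nonzero constant term, we may regard this as a divisibility $\Delta_{K'}(t)\mid\Delta_K(t)$ in $\Z[t]$ and write $\Delta_K(t)=g(t)\,\Delta_{K'}(t)$ with $g(t)\in\Z[t]$.

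The key step is to control $g(t)$. Being the Alexander polynomial of a nontrivial two-bridge knot, $\Delta_{K'}(t)$ is symmetric and nontrivial, so its degree is even and positive; since $\deg\Delta_K=2$ and degrees add, $\deg\Delta_{K'}=2$. Consequently $g(t)$ has degree $0$, i.e. $g(t)=c$ for some $c\in\Z$. Evaluating $\Delta_K(t)=c\,\Delta_{K'}(t)$ at $t=1$ and using $\Delta_K(1)=\Delta_{K'}(1)=\pm1$ gives $c=\pm1$, so that $\Delta_K(t)\doteq\Delta_{K'}(t)$. Finally, since $|\Delta_{\fb(p,q)}(-1)|$ equals the determinant $p$, evaluating at $t=-1$ yields $p=|\Delta_K(-1)|=|\Delta_{K'}(-1)|=p'$, which contradicts $p'<p$. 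Hence no such $K'$ exists and $K$ is minimal.

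The delicate point, and the main obstacle, is the control of the cofactor $g(t)$: a priori it could be a nontrivial integer constant rather than a unit, and indeed the $6_1$ example shows that $\Delta_K$ may genuinely factor, so irreducibility is unavailable. The resolution is that both $\Delta_K$ and $\Delta_{K'}$ take the value $\pm1$ at $t=1$, which pins $c$ down to a unit; equivalently, a degree-$2$ two-bridge Alexander polynomial $a t^2-bt+a$ with $2a-b=\pm1$ satisfies $\gcd(a,b)=1$ and is therefore primitive, so Gauss's lemma forces $g(t)$ to be $\pm1$.
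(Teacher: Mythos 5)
Your proof is correct and follows the route the paper clearly intends: the corollary is stated without proof, and the natural argument is exactly yours, namely to rerun the proof of Proposition \ref{prop:firstcondition} (Theorem \ref{thm:bb}, divisibility of Alexander polynomials, and the determinant identity $|\Delta_{\fb(p,q)}(-1)|=p$), with irreducibility replaced by the observation that a nontrivial even-span divisor of a degree-$2$ polynomial must itself have degree $2$, and that $\Delta(1)=\pm1$ forces the constant cofactor to be a unit. Your remark that the corollary is \emph{not} a formal consequence of the statement of Proposition \ref{prop:firstcondition} --- since, e.g., $\Delta_{\fb(9,2)}(t)=2t^2-5t+2=(2t-1)(t-2)$ is reducible over $\Z$ --- is accurate and is precisely the point the paper leaves implicit.
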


\section{Main theorem I}

In this section, we determine which two-bridge knot $\fb(p,q)$ is minimal where $q \leq 6$. 

First of all, we can determine whether two-bridge knots $\fb(p,1)$ are minimal or not, 
since $\fb(p,1)$ is a torus knot and the existence of an epimorphism from a torus knot group 
is studied in \cite{silverwhitten}. 
Namely, $\fb(p,1)$ is minimal if and only if $p$ is prime. 

Next, two-bridge knots $\fb(2k + 1,2)$ are minimal, 
since these knots are twist knots and their minimalities were already shown 
in \cite{Burde}, \cite{MPV}, \cite{N}, and \cite{NT}. 
Furthermore, two-bridge knots 
$\fb(3k+ 1,3)$, $\fb(3k+ 2,3)$, $\fb(4k+ 1,4)$, and $\fb(4k + 3,4)$  
are also minimal as follows. 
These knots are always double twist knots and we have the following proposition. 

\begin{figure}[htbp]
$$
\begin{minipage}{8cm}\includegraphics[width=\hsize]{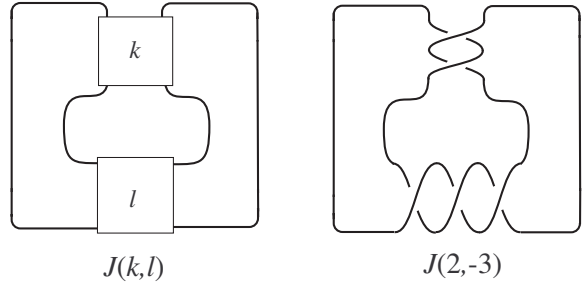}\end{minipage}
$$
\caption{Double twist knots $J(k,l)$ and $J(2,-3)$.}
\label{fig_jkl}
\end{figure}

\begin{proposition}
Double twist knots $J(k,l)$, where $k l$ is even, are minimal. 
\end{proposition}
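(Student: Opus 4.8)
The plan is to realize each double twist knot $J(k,l)$ with $kl$ even as a two-bridge knot $\fb(p,q)$ and then to verify one of the two sufficient conditions proved above, the parities of $k$ and $l$ dictating which one to use. First I would dispose of the non-hyperbolic knots. A double twist knot is prime and two-bridge, hence non-satellite, so its only non-hyperbolic instances are torus knots; among the $J(k,l)$ with $kl$ even the only one is the trefoil, whose Alexander polynomial has degree $2$, so Corollary \ref{cor:alexpolyofdeg2} already gives its minimality. Every other $J(k,l)$ is then hyperbolic.

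Next I would compute $\Delta_{J(k,l)}(t)$ from the continued fraction of $p/q$ and split according to the Seifert genus. The knot $J(k,l)$ has genus one exactly when both $k$ and $l$ are even or one of them equals $\pm 2$; there $\Delta_{J(k,l)}(t)$ has degree $2$ and Corollary \ref{cor:alexpolyofdeg2} applies directly. This case already contains the twist knots, recovering their minimality.

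In the complementary case — one of $k,l$ even with $|\cdot|\ge 4$ and the other odd — the genus is $g\ge 2$, and the plan is to prove $\Delta_{J(k,l)}(t)$ irreducible over $\Z$ and invoke Proposition \ref{prop:firstcondition}. Reciprocity organizes the check: $\Delta_{J(k,l)}(t)$ is a reciprocal polynomial of degree $2g$ with $\Delta(1)=\pm1$ and $|\Delta(-1)|=p$, and its small leading and constant coefficients leave only a few candidate rational roots, all easily excluded; hence it has no linear factor, and any nontrivial factorization must pair a factor with its reciprocal, reducing irreducibility to finitely many coefficient comparisons. For instance $6_2=J(4,-3)$ yields $t^4-3t^3+3t^2-3t+1$ and $J(4,5)$ yields $3t^4-5t^3+5t^2-5t+3$, both irreducible; the explicit form of $\Delta_{J(k,l)}(t)$ coming from the continued fraction is meant to make such a comparison uniform in $(k,l)$.

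The main obstacle is exactly this uniform irreducibility: ruling out all reciprocal factorizations of $\Delta_{J(k,l)}(t)$ simultaneously for the whole two-parameter family, rather than knot by knot, is where the coefficient structure must be pinned down. If $\Delta_{J(k,l)}(t)$ were reducible for some indices, the fallback is Proposition \ref{prop:secondcondition}: in the trace coordinates $x=\tr\rho(a)$, $z=\tr\rho(ab)$ the nonabelian character variety of $J(k,l)$ is the plane curve cut out by the Riley polynomial $\Phi_{J(k,l)}(x,z)=0$, expressible through Chebyshev polynomials in $z$, whose irreducibility would likewise yield minimality.
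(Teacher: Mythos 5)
Your argument has a genuine gap at exactly the point you yourself flag as ``the main obstacle'': in the case where one of $k,l$ is even of absolute value at least $4$ and the other is odd, you need $\Delta_{J(k,l)}(t)$ to be irreducible over $\Z$ uniformly in the two parameters, but you offer only two worked examples and a sketch of how a reciprocal-factorization analysis ``is meant to'' become uniform. Nothing in the proposal actually excludes a factorization into reciprocal pairs for general $(k,l)$; the degree of the polynomial grows with the parameters, so this is not a finite check, and the paper nowhere asserts (let alone proves) such a uniform irreducibility statement. The fallback you mention --- irreducibility of the Riley curve $\Phi_{J(k,l)}(x,z)=0$ --- is likewise only named, not established, so neither branch of your plan closes the main case.

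The paper's proof avoids your genus-based case division entirely: it quotes the theorem of \cite{MPV} that the nonabelian character variety of $J(k,l)$ is irreducible whenever $k\neq l$, so Proposition \ref{prop:secondcondition} gives minimality immediately in all those cases, and the single remaining family $J(2k,2k)$ (whose character variety is reducible) is handled by the explicit Alexander polynomial $k^2t^2-(2k^2-1)t+k^2$, which has degree $2$ and is irreducible over $\Z$, so Proposition \ref{prop:firstcondition} applies. In other words, the substantive input is the character-variety irreducibility of \cite{MPV}, and the Alexander polynomial is only needed in the one degree-$2$ case where Corollary \ref{cor:alexpolyofdeg2} trivially suffices. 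To repair your proof you should either import that result of \cite{MPV} as your primary tool, or supply an actual proof of irreducibility of $\Delta_{J(k,l)}(t)$ for the full two-parameter family in the higher-genus case; as written, that step is asserted, not proved.
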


\begin{proof}
It was shown in \cite{MPV} that if $k \neq l$, 
the nonabelian character variety of the double twist knot $J(k,l)$ is irreducible, 
in which case $J(k,l)$ is minimal by Proposition \ref{prop:secondcondition}. 
Now we consider the double twist knot $J(2k,2k)$. 
By \cite{MPV}, the nonabelian character variety of $J(2k,2k)$ is reducible. 
However, 
it is easy to see that the Alexander polynomial of $J(2k,2k)$ is 
\[
\Delta_{J(2k,2k)} (t) = k^2 t^2 - (2 k^2 - 1) + k^2,
\]
which is irreducible over ${\mathbb Z}$. 
By Proposition \ref{prop:firstcondition}, we obtain the statement.
\end{proof}

In general, 
two-bridge knots $\fb(kq+1,q)$ and $\fb(kq-1,q)$ are 
the double twist knots as depicted in Figure \ref{fig_bkp+1}. 

\begin{figure}[htbp]
$$
\begin{minipage}{6cm}\includegraphics[width=\hsize]{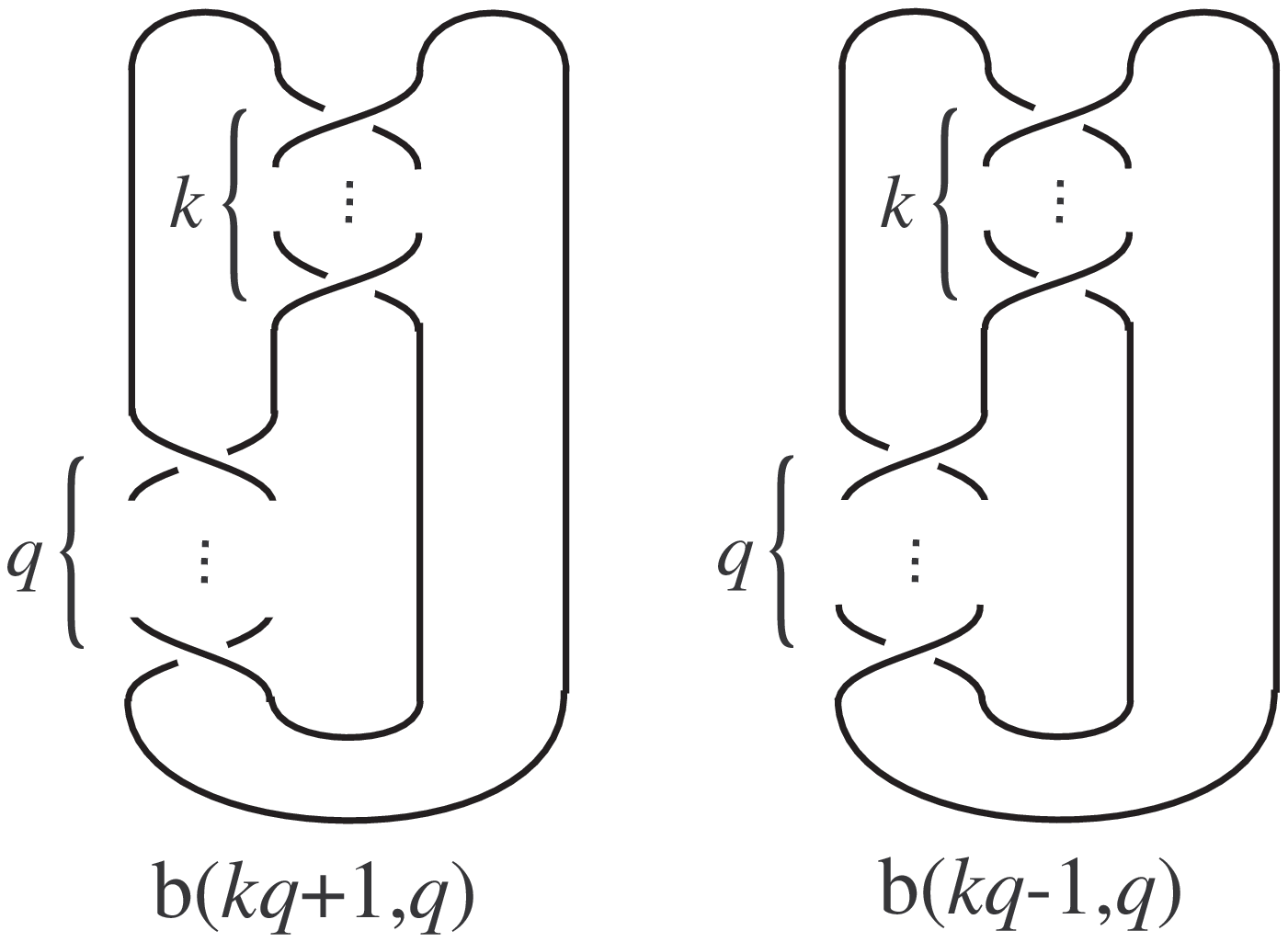}\end{minipage}
$$
\caption{2-bridge knots $\fb(kq+1,q)$ and $\fb(kq-1,q)$.}
\label{fig_bkp+1}
\end{figure}

\begin{corollary}\label{cor:kpplusminusone}
Two-bridge knots $\fb(kq+1,q)$ and $\fb(kq-1,q)$ are minimal. 
\end{corollary}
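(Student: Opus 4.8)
The plan is to deduce the statement directly from the preceding Proposition, which guarantees that a double twist knot $J(k,l)$ is minimal whenever $kl$ is even; the only real task is to exhibit $\fb(kq+1,q)$ and $\fb(kq-1,q)$ as such double twist knots with an even product of twist parameters. For the identification I would expand the defining fractions as continued fractions,
\[
\frac{kq+1}{q}=k+\frac{1}{q},\qquad \frac{kq-1}{q}=k-\frac{1}{q},
\]
so that each knot manifestly has two twist regions, of sizes $k$ and $q$, exactly as drawn in Figure \ref{fig_bkp+1}. Fixing signs against the convention used above---under which $J(2k,2k)$ has Alexander polynomial $k^2t^2-(2k^2-1)t+k^2$, so that $J(2,2)=\fb(3,2)$ is the trefoil---this reads as $\fb(kq-1,q)=J(k,q)$ and $\fb(kq+1,q)=J(k,-q)$. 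Throughout I take $q\ge 2$, so that these are genuine double twist knots; for $q=1$ the knots $\fb(k\pm1,1)$ are the torus knots $T(2,k\pm1)$, which are minimal only when $k\pm1$ is prime, so a restriction of this kind is unavoidable.

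The second step is the parity observation. For $\fb(kq\pm1,q)$ to be a knot rather than a two-component link its determinant $p=kq\pm1$ must be odd, which forces $kq$ to be even. Hence in either presentation the product of the two twist parameters, namely $\pm kq$, is even, and both $J(k,q)$ and $J(k,-q)$ satisfy the hypothesis of the Proposition on double twist knots. Applying that Proposition then yields minimality at once.

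The genuine work lies entirely in this bookkeeping---matching the continued fraction data to the correct signed parameters and confirming that knottedness always delivers an even product---rather than in any new geometric input, since hyperbolicity and the split between the character-variety argument (Proposition \ref{prop:secondcondition}) and the Alexander argument (Proposition \ref{prop:firstcondition}) are already absorbed into the proof of the Proposition I am invoking. I would only flag the coincidence $k=q$: combined with $kq$ even this gives $q=2m$ and the knot $\fb(4m^2-1,2m)=J(2m,2m)$, the one case in which the nonabelian character variety degenerates, but it is precisely the case the Proposition settles through irreducibility of its Alexander polynomial, so coverage of every admissible pair $(k,q)$ with $kq$ even is complete. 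Accordingly I anticipate no substantial obstacle beyond getting the parametrization and its signs right.
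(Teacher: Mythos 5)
Your proof is correct and follows essentially the same route as the paper: the paper likewise deduces the corollary by identifying $\fb(kq+1,q)$ and $\fb(kq-1,q)$ with double twist knots (Figure \ref{fig_bkp+1}) and invoking the proposition that $J(k,l)$ with $kl$ even is minimal. Your additional bookkeeping---that knottedness forces $kq$ to be even, and that $q\ge 2$ must be assumed since $\fb(k\pm1,1)$ degenerates to a torus knot---is correct and supplies details the paper leaves implicit.
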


Next, we discuss two-bridge knots $\fb(p,5)$. 
As stated above, two-bridge knots $\fb(5k + 1,5)$ and $\fb(5k + 4,5)$ are double twist knots 
and hence they are minimal. 
For two-bridge knots $\fb(5k+2,5)$ and $\fb(5k+3,5)$, 
we need to investigate their nonabelian character varieties. 

\begin{theorem} \label{thm} 
For any integer $n \ge 0$, the following holds:  
\begin{enumerate}
\item[(1)] The nonabelian character variety of the two-bridge knot $\fb(5(2n+1) + 2,5)$ 
is reducible if and only if $n \equiv 2 \pmod{3}$.
\item[(2)] The nonabelian character variety of the two-bridge knot $\fb(5(2n+1) - 2,5)$ 
is reducible if and only if $n \equiv 0 \pmod{3}$.
\end{enumerate}
\end{theorem}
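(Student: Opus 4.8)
The plan is to compute the defining polynomial of the nonabelian character variety of $\fb(p,5)$ explicitly and then analyze when it factors. Recall that for a two-bridge knot with group $G(\fb(p,q)) = \langle a,b \mid wa = bw\rangle$, a nonabelian representation $\rho$ is determined up to conjugacy by the trace coordinates $x = \tr\rho(a) = \tr\rho(b)$ and $z = \tr\rho(ab)$, and the nonabelian character variety is the plane curve cut out by a single polynomial $\Phi_{p}(x,z)$ coming from the Riley relation; its reducibility is exactly the nontrivial factorization of $\Phi_p$ over $\C$. Since $q=5$ is fixed, the relator word $w$ for $\fb(p,5)$ has a five-syllable block structure whose syllable lengths are determined by $p$; writing $\rho(a),\rho(b)$ in Riley normal form and multiplying the matrix blocks, each block contributes a Chebyshev polynomial $S_k(z)$ (with $S_0=1$, $S_1=z$, $S_{k+1}=zS_k-S_{k-1}$). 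First I would carry out this product to obtain $\Phi_{p}(x,z)$ in closed form as a $\Z$-linear combination of products of the $S_k$, with $k$ growing linearly in $n$ along each family $p = 10n+7$ and $p = 10n+3$.

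Second, I would record the reformulation that both (1) and (2) say exactly that $X^{\mathrm{irr}}(\fb(p,5))$ is reducible if and only if $3 \mid p$; indeed $10n+7 \equiv 0 \pmod 3$ precisely when $n\equiv 2$, and $10n+3\equiv 0\pmod 3$ precisely when $n\equiv 0$. The expected conceptual reason is that $3 \mid p$ is the arithmetic condition under which, by the Ohtsuki--Riley--Sakuma construction, $G(\fb(p,5))$ admits an epimorphism onto a trefoil group; the pullback $\phi^{*}$ then embeds the (one-dimensional, irreducible) nonabelian character curve of the trefoil into $X(\fb(p,5))$ as a component distinct from the canonical component carrying the discrete faithful character, forcing reducibility. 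This both explains the answer and tells me which factor of $\Phi_p$ to seek.

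For the ``if'' direction I would exhibit an explicit nontrivial factor of $\Phi_p$ when $3\mid p$: either by pulling back the trefoil character curve as above, or, more concretely, by showing that under $3\mid p$ the Chebyshev expression for $\Phi_p$ acquires a common factor with some $S_m$, using the identity $\gcd(S_{a-1},S_{b-1}) = S_{\gcd(a,b)-1}$. The ``only if'' direction is the main obstacle: I must prove that $\Phi_p(x,z)$ is \emph{irreducible} over $\C$ whenever $3\nmid p$. For this I would regard $\Phi_p$ as a polynomial in $x$ over $\C[z]$, read off its degree, leading term and constant term from the Chebyshev formula, and rule out any nontrivial factorization, for instance by an Eisenstein-type or Newton-polygon argument at a suitable irreducible factor of $z\mp 1$ (or another Chebyshev value), or by showing directly that any factorization would force the gcd condition of the previous step and hence $3\mid p$. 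Proving irreducibility is always more delicate than producing a factor, so this is where the real work lies.

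Assembling the two directions for each family completes the proof: (1) is the case $p=10n+7$ and (2) the case $p=10n+3$, with the same Chebyshev computation applying to both up to the choice of syllable lengths, so (2) follows by the method used for (1). Degenerate small values of $n$, where $\fb(p,5)$ collapses to a knot genuinely governed by a smaller $q$ (for instance $\fb(3,5)$, which is the trefoil), would be checked directly to confirm the boundary of the stated range.
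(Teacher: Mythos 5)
Your skeleton is the same as the paper's: put $\rho$ in Riley normal form, expand the relevant powers of $\rho(ba)$ through the Chebyshev polynomials $S_k$, reduce the relation $wa=bw$ to a single polynomial $R(x,z)$ in the trace coordinates, and decide reducibility of the nonabelian character variety by factoring $R$ over $\C$. Your reformulation of both parts as ``reducible iff $3\mid p$'' is correct, and your conceptual explanation is consistent with what actually happens: when $n\equiv 2\pmod 3$ the extra component appears as the factor $z-1$ (the trefoil curve), and indeed $z-1\mid S_n(z)$ exactly when $n\equiv 2\pmod 3$, which is where the arithmetic condition enters. The ``if'' direction via the Ohtsuki--Riley--Sakuma epimorphism onto the trefoil, or via a common Chebyshev factor, is workable, and your remark that the degenerate case $n=0$ of family (2) gives $\fb(3,5)=$ trefoil (whose nonabelian character variety is irreducible, so the stated ``only if'' fails there) is a fair catch that the paper does not address.

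The genuine gap is the irreducibility half, which you correctly identify as ``where the real work lies'' but then leave as a menu of unverified options. None of your candidates (Eisenstein, Newton polygon at a factor of $z\mp1$, or ``forcing the gcd condition'') is shown to apply, and the argument that actually closes the case is of a different shape. The computation yields $R(x,z)=\alpha x^4+\beta x^2+\gamma$ with $\alpha,\beta,\gamma\in\C[z]$ explicit in $X=S_n(z)$, $Y=S_{n-1}(z)$; irreducibility is then proved in two steps. First, the content $\gcd(\alpha,\beta,\gamma)$ is computed to be $1$ (or $z-1$ when $n\equiv2$) using $\gcd(S_n,S_{n-1})=1$ and the product formula $S_k(y)=\prod_j\big(y-2\cos\frac{j\pi}{k+1}\big)$. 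Second, one rules out factors of degree $1,2,3$ in $x$ by exploiting that $R$ is quartic and \emph{even} in $x$: a factor $ax^2+bx+c$ with $b\ne0$ pairs with $ax^2-bx+c$ and forces $\alpha/\gamma$ to be a square in $\C(z)$, which fails because $z-2$ divides $\alpha=(z-2)X^2(X-Y)^3$ to odd order while $\gamma(2)=\big(S_n(2)-S_{n-1}(2)\big)^5=1$; the $b=0$ case is killed by showing the discriminant $\beta^2-4\alpha\gamma$ is not a square, again by tracking the $(z-2)$-valuation. Without the explicit coefficients, their $(z-2)$-valuations, and the evenness-in-$x$ pairing trick, the ``only if'' direction remains a plan rather than a proof; an Eisenstein-type criterion would require $P$-adic valuation data for $\alpha,\beta,\gamma$ that your outline never produces.
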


\begin{corollary}
Two-bridge knots $\fb(5k + 3,5)$ and $\fb(5k - 3,5)$ are minimal, if $k \not\equiv 0 \pmod{6}$.
\end{corollary}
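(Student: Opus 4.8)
The plan is to deduce the corollary directly from Theorem \ref{thm} together with Proposition \ref{prop:secondcondition}, the substantive work being only a translation between the index $k$ used here and the index $n$ used in Theorem \ref{thm}. First I would observe that $\fb(5k\pm 3,5)$ is a knot only when $p=5k\pm 3$ is odd, which forces $k$ to be even; so I would write $k=2n$ throughout and restrict attention to these values (odd $k$ yields two-component links, so the claim is vacuous there). I would also record, once and for all, that the relevant knots are hyperbolic: a two-bridge knot $\fb(p,q)$ is a torus knot precisely when $q\equiv\pm1\pmod p$, and for $q=5$ this would force $p\mid 4$ or $p\mid 6$, impossible for the relevant $p\ge 7$. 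Hence Proposition \ref{prop:secondcondition} applies whenever the nonabelian character variety is irreducible.

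For the family $\fb(5k+3,5)$, substituting $k=2n$ gives $\fb(10n+3,5)=\fb(5(2n+1)-2,5)$, which is exactly the knot appearing in Theorem \ref{thm}(2). That theorem says its nonabelian character variety is reducible if and only if $n\equiv 0\pmod 3$; since $n\equiv 0\pmod 3$ is equivalent to $k=2n\equiv 0\pmod 6$, the variety is irreducible precisely when $k\not\equiv 0\pmod 6$, and Proposition \ref{prop:secondcondition} then yields minimality.

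For the family $\fb(5k-3,5)$, substituting $k=2n$ gives $\fb(10n-3,5)$, and rewriting $10n-3=5(2(n-1)+1)+2$ identifies this with the knot $\fb(5(2m+1)+2,5)$ of Theorem \ref{thm}(1) for $m=n-1$. By that theorem the variety is reducible if and only if $m\equiv 2\pmod 3$, i.e.\ $n-1\equiv 2\pmod 3$, i.e.\ $n\equiv 0\pmod 3$, which once more is $k\equiv 0\pmod 6$. Thus for $k\not\equiv 0\pmod 6$ the variety is irreducible and the knot is minimal by Proposition \ref{prop:secondcondition}.

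The real content is entirely inside Theorem \ref{thm}, which I am assuming; the corollary itself is essentially bookkeeping. The only points that require care are the parity constraint that makes $\fb(5k\pm3,5)$ an honest knot (hence the passage to $k=2n$) and the verification of hyperbolicity so that Proposition \ref{prop:secondcondition} is legitimately applicable. Matching the two index shifts — immediate for the $+3$ family, and a shift by one for the $-3$ family — is the only place an off-by-one slip could occur, so I would double-check the identity $10n-3=5(2n-1)+2$ explicitly before invoking Theorem \ref{thm}(1).
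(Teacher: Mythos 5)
Your proposal is correct and matches the paper's (implicit) derivation: the corollary is stated without proof precisely because it follows from Theorem \ref{thm} combined with Proposition \ref{prop:secondcondition}, and your index translation ($k=2n$, with the shift $10n-3=5(2(n-1)+1)+2$ for the second family) and your hyperbolicity check are both accurate.
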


The proof of (2) of Theorem \ref{thm} is similar to that of (1) and 
so we present only the proof of (1). 

Let $K$ denote the two-bridge knot $\fb(5(2n+1)+2,5)$. 
The knot group of $K$ admits the following standard two-generator presentation: 
$$\pi_1(K) = \la a, b \mid wa =  b w \ra$$
where $$w= a (ba)^nb^{-1}(ba)^{-n}  a (ba)^n b (ba)^{-n} a^{-1} (ba)^n b.$$

We make use of the Chebyshev polynomials 
to describe the nonabelian character variety. 
Let $S_k(y)$ be the Chebychev polynomials of the second kind defined by 
$S_0(y)=1$, $S_1(y)=y$ and $S_{k}(y)=yS_{k-1}(y)-S_{k-2}(y)$ for all integers $k$. 

Note that $S_k(2)=k+1$ and $S_k(-2)=(-1)^k (k+1)$. 
Moreover if $y=u+u^{-1}$, where $u \not= \pm 1$, then 
$S_k(y)=\frac{u^{k+1}-u^{-(k+1)}}{u-u^{-1}}$.

\begin{lemma}[cf. \cite{N}] \label{chev}
For all $k \ge 1$ we have 
$$S_k(y) = \prod_{j=1}^{k} \Big( y-2\cos \frac{j\pi}{k+1} \Big).$$
\end{lemma}

\begin{lemma} [cf. \cite{Tran}] \label{gcd}
For all integers $k$ we have
$$S^2_k(y) + S^2_{k-1}(y) - y S_k(y) S_{k-1}(y) =1.$$
Consequently, we have $\gcd(S_k(y), S_{k-1}(y)) = 1$ in $\C[y]$.
\end{lemma}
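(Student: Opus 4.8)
The plan is to establish the polynomial identity by induction on $k$ and then read off the gcd statement as an immediate Bézout-type consequence. First I would set $f(k) := S_k^2(y) + S_{k-1}^2(y) - y\, S_k(y) S_{k-1}(y)$ and verify the base case. Using $S_0 = 1$ and $S_1 = y$ one computes $f(1) = y^2 + 1 - y \cdot y = 1$; and since the recurrence $S_k = y S_{k-1} - S_{k-2}$ forces $S_{-1} = 0$, one also gets $f(0) = 1$. These two values anchor the induction in both directions.

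Next I would show that $f(k+1) = f(k)$ for every integer $k$. Substituting $S_{k+1} = y S_k - S_{k-1}$ into $f(k+1) = S_{k+1}^2 + S_k^2 - y\, S_{k+1} S_k$ and expanding, the $y^2 S_k^2$ terms cancel and the remaining cross terms combine to recover exactly $S_k^2 + S_{k-1}^2 - y\, S_k S_{k-1} = f(k)$. Because the defining recurrence is symmetric, reading equally as $S_{k-2} = y S_{k-1} - S_k$, this step is valid for all integers $k$, so $f$ is constant on $\Z$. Combined with $f(1) = 1$ this gives $f(k) = 1$ for every integer $k$, which is the asserted identity.

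Finally, for the gcd assertion I would rewrite the established identity as a Bézout relation in $\C[y]$. Grouping the left-hand side yields
$$ S_k(y)\bigl(S_k(y) - y S_{k-1}(y)\bigr) + S_{k-1}(y)\cdot S_{k-1}(y) = 1, $$
which exhibits $1$ as a $\C[y]$-linear combination of $S_k(y)$ and $S_{k-1}(y)$. Hence any common divisor of $S_k$ and $S_{k-1}$ in the polynomial ring $\C[y]$ must divide $1$ and is therefore a unit, so $\gcd(S_k(y), S_{k-1}(y)) = 1$.

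This argument is entirely elementary, so I do not anticipate a genuine obstacle; the only point requiring a little care is the extension to negative indices, which is handled by observing that the recurrence runs symmetrically in both directions from the two base values $f(0) = f(1) = 1$. An alternative route would substitute the closed form $S_k(y) = (u^{k+1} - u^{-(k+1)})/(u - u^{-1})$ with $y = u + u^{-1}$, verify the identity as a Laurent-polynomial identity in $u$, and then invoke Zariski density of $\{\, u + u^{-1} : u \neq \pm 1 \,\}$ to conclude that it holds identically; but the telescoping induction above is shorter and avoids the excluded values $y = \pm 2$.
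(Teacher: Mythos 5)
Your proof is correct and complete. The paper itself gives no argument for this lemma --- it simply cites \cite{Tran} --- so there is nothing to compare against line by line; your telescoping induction (checking $f(1)=1$, using $S_{-1}=0$ to get $f(0)=1$, and showing $f(k+1)=f(k)$ via the recurrence, which runs in both directions) is the standard and entirely adequate way to establish the identity, and the rearrangement $S_k\bigl(S_k - y S_{k-1}\bigr) + S_{k-1}\cdot S_{k-1} = 1$ is a genuine B\'ezout relation in $\C[y]$, so the gcd conclusion follows immediately. Your care with negative indices is appropriate, since the lemma is stated for all integers $k$ and the paper does use the recurrence for all integers.
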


\begin{lemma} [cf. \cite{Tran}] \label{power}
Suppose $M = \left[ \begin{array}{cc}
a & b \\
c & d \end{array} \right] \in SL_2(\C)$. Then, for all integers $k$, we have 
\begin{eqnarray*}
M^k &=& \left[ \begin{array}{cc}
S_{k}(y) - d S_{k-1}(y) & b S_{k-1}(y) \\
c S_{k-1}(y) & S_{k}(y) - a S_{k-1}(y) \end{array} \right]
\end{eqnarray*}
where $y = \tr M$.
\end{lemma}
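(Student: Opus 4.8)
The plan is to reduce everything to the Cayley--Hamilton relation and then match the resulting three-term recursion with the one defining the Chebyshev polynomials. Since $M \in SL_2(\C)$ has $\det M = 1$ and $\tr M = y$, its characteristic polynomial is $\lambda^2 - y\lambda + 1$, so Cayley--Hamilton gives
\[
M^2 - y M + I = 0.
\]
Multiplying by $M^{k-1}$ yields $M^{k+1} = y M^k - M^{k-1}$ for every integer $k$; this holds for negative exponents as well because $M$ is invertible, which is exactly what will let one argument cover all integers at once. This is precisely the recursion $S_{k+1}(y) = y S_k(y) - S_{k-1}(y)$ satisfied by the $S_k$.

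First I would record the two base cases. For $k=1$, using $S_0(y)=1$, $S_1(y)=y$ and $y=a+d$, the claimed matrix reads
\[
\left[ \begin{array}{cc} S_1(y) - d S_0(y) & b S_0(y) \\ c S_0(y) & S_1(y) - a S_0(y) \end{array} \right]
= \left[ \begin{array}{cc} y-d & b \\ c & y-a \end{array} \right] = M .
\]
For $k=0$ one first notes that the recursion forces the convention $S_{-1}(y)=0$ (take $k=1$ in $S_1 = y S_0 - S_{-1}$), after which the claimed matrix collapses to $S_0(y)\,I = I = M^0$.

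Next I would observe that these base cases pin down both sides completely. Denote by $F_k$ the matrix on the right-hand side of the asserted identity. Each entry of $F_k$ is a $\C$-linear combination of $S_k(y)$ and $S_{k-1}(y)$ with constant coefficients, so a short computation from $S_{k+1} = y S_k - S_{k-1}$ shows entrywise that $F_{k+1} = y F_k - F_{k-1}$; for instance the top-left entry gives $y\bigl(S_k - d S_{k-1}\bigr) - \bigl(S_{k-1} - d S_{k-2}\bigr) = S_{k+1} - d S_k$. Thus $\{M^k\}$ and $\{F_k\}$ are both solutions of the same second-order linear matrix recurrence. Since such a recurrence is uniquely determined by any two consecutive terms, and the two sequences agree at $k=0$ and $k=1$, they coincide for every integer $k$, negative exponents included.

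There is no serious obstacle here; the lemma is essentially bookkeeping around Cayley--Hamilton. The only points that require care are fixing the convention $S_{-1}(y)=0$ forced by the recursion (needed for the index $k=0$) and observing that the Cayley--Hamilton recursion remains valid for negative powers, so that a single uniqueness argument handles all $k \in \Z$ simultaneously rather than requiring a separate treatment of $M^{-1}$.
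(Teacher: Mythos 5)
Your proof is correct and complete: the Cayley--Hamilton identity $M^2 = yM - I$ together with the two base cases $k=0,1$ and the uniqueness of a bi-infinite solution to the second-order recurrence settles all integers $k$ at once, and you rightly note that the paper's convention $S_k(y)=yS_{k-1}(y)-S_{k-2}(y)$ for all integers forces $S_{-1}(y)=0$. The paper itself gives no proof, deferring to the reference [Tran], but the argument there is this same standard one, so nothing further is needed.
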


Suppose $\rho: \pi_1(K) \to SL_2(\C)$ is a nonabelian representation. 
By Riley's result \cite{riley}, 
up to conjugation we may assume that $$\rho(a) = \begin{pmatrix}
s & 1\\
0 & s^{-1}
\end{pmatrix} \quad \text{and} \quad \rho(b) = \begin{pmatrix}
s & 0\\
z - s^2 - s^{-2} & s^{-1}
\end{pmatrix},$$ 
where $(s,z) \in \C^2$ satisfies the matrix equation $\rho(wa) - \rho(bw) = O$. 
Note that $z = \tr \rho(ba)$. 

We first compute $\rho((ba)^n)$. 

\begin{lemma} \label{(ba)^m}
We have
$$
\rho((ba)^n) = \begin{pmatrix}
S_n(z) - s^{-2} S_{n-1}(z) & s^{-1} S_{n-1}(z)\\
s^{-1}(z - s^2 - s^{-2}) S_{n-1}(z) & S_n(z) - (z - s^{-2}) S_{n-1}(z)
\end{pmatrix}.
$$
\end{lemma}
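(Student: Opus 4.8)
The plan is to reduce the computation of $\rho((ba)^n)$ to a single application of the power formula in Lemma \ref{power}. First I would compute the matrix $M := \rho(ba)$ explicitly from the given parametrizations of $\rho(a)$ and $\rho(b)$. A direct multiplication yields
$$M = \begin{pmatrix} z - s^{-2} & s^{-1} \\ s^{-1}(z - s^2 - s^{-2}) & s^{-2} \end{pmatrix},$$
and one checks immediately that $\det M = 1$, so $M \in SL_2(\C)$, and that $\tr M = (z - s^{-2}) + s^{-2} = z$. This trace identity is precisely the normalization $z = \tr \rho(ba)$ recorded after Riley's parametrization, and it is the fact that guarantees the Chebyshev polynomials appearing below are evaluated at $z$ rather than at some other quantity.

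Next I would apply Lemma \ref{power} to $M$ with $k = n$ and $y = \tr M = z$. Writing $M = \left[\begin{smallmatrix} a & b \\ c & d\end{smallmatrix}\right]$ with $a = z - s^{-2}$, $b = s^{-1}$, $c = s^{-1}(z - s^2 - s^{-2})$, and $d = s^{-2}$, the formula gives
$$M^n = \begin{pmatrix} S_n(z) - d\,S_{n-1}(z) & b\,S_{n-1}(z) \\ c\,S_{n-1}(z) & S_n(z) - a\,S_{n-1}(z) \end{pmatrix}.$$
Substituting the four entries then reproduces the claimed matrix: the $(1,1)$ entry becomes $S_n(z) - s^{-2}S_{n-1}(z)$, the $(1,2)$ entry becomes $s^{-1}S_{n-1}(z)$, the $(2,1)$ entry becomes $s^{-1}(z - s^2 - s^{-2})S_{n-1}(z)$, and the $(2,2)$ entry becomes $S_n(z) - (z - s^{-2})S_{n-1}(z)$, as required. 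The case $n = 0$ is recovered automatically since $S_{-1}(z) = 0$ and $S_0(z) = 1$ force $M^0$ to be the identity.

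There is no deep obstacle here; the content of the lemma is essentially the content of Lemma \ref{power} specialized to one explicit matrix. The only point requiring genuine care is bookkeeping: getting the entries of $M = \rho(ba)$ correct (in particular the combination $z - s^2 - s^{-2}$ coming from $\rho(b)$), and correctly matching the diagonal entries $a$ and $d$ to their reversed positions in the power formula, where $d$ appears in the $(1,1)$ slot and $a$ in the $(2,2)$ slot. As an alternative to invoking Lemma \ref{power}, one could instead prove the statement directly by induction on $n$ using the Chebyshev recurrence $S_k(z) = z S_{k-1}(z) - S_{k-2}(z)$ together with $M^{n} = M \cdot M^{n-1}$, but the power formula makes the computation immediate and is the route I would follow.
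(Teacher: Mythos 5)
Your proof is correct and follows exactly the paper's own route: compute $\rho(ba)$ explicitly (obtaining the same matrix with trace $z$) and then apply the power formula of Lemma \ref{power} with $y = z$. The paper's proof is just a terser version of the same two steps.
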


\begin{proof}
The lemma follows from 
$$
\rho(ba) = \begin{pmatrix}
z - s^{-2} & s^{-1}\\
s^{-1}(z - s^2 - s^{-2})  & s^{-2}
\end{pmatrix}
$$
and Lemma \ref{power}.
\end{proof}

We now compute $\rho(wa) - \rho(bw)$. Let $x= \tr \rho(a) = \tr \rho(b) = s + s^{-1}$, $X = S_n(z)$ and $Y = S_{n-1}(z)$. 

\begin{proposition} \label{char}
We have
$$
\rho(wa) - \rho(bw) = \begin{pmatrix}
0 & R(x,z)\\
-(z + 2 - x^2) R(x,z) & 0
\end{pmatrix}
$$
where $R(x,z) =\alpha x^4 + \beta x^2 + \gamma$ and $\alpha, \beta, \gamma \in \C[z]$ given by
\begin{eqnarray*}
\alpha &=& (z-2) X^2  (X-Y)^3, \\
\beta &=& - (z-2) X  (X-Y) (2 X^3 z+X^3-X^2 Y z-6 X^2 Y+X Y^2+2 Y^3), \\
\gamma &=& (X-Y)^5 + (z-2) (X^5 z^2+X^5 z-5 X^4 Y z-5 X^4 Y+10 X^3 Y^2-Y^5).
\end{eqnarray*}
\end{proposition}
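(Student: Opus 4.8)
The plan is to reduce the statement to the computation of a single matrix entry by exploiting the shape of $\rho(wa)-\rho(bw)$ together with a symmetry of the relator, and only then to expand the word explicitly. First I would record the general form of $\rho(wa)-\rho(bw)$ in terms of $W:=\rho(w)=\begin{pmatrix} w_{11} & w_{12}\\ w_{21} & w_{22}\end{pmatrix}$. Setting $v:=z-s^2-s^{-2}$ and multiplying out $W\rho(a)-\rho(b)W$ from the defining matrices of $\rho(a)$ and $\rho(b)$, the $(1,1)$-entry is $sw_{11}-sw_{11}=0$ for free, the $(1,2)$-entry is $w_{11}-(s-s^{-1})w_{12}$, the $(2,2)$-entry is $w_{21}-v\,w_{12}$, and the $(2,1)$-entry is $(s-s^{-1})w_{21}-v\,w_{11}$. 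Since $x=s+s^{-1}$ gives $z+2-x^2=v$, the whole statement would follow from the single identity $w_{21}=v\,w_{12}$: it kills the $(2,2)$-entry, names the $(1,2)$-entry $R(x,z):=w_{11}-(s-s^{-1})w_{12}$, and turns the $(2,1)$-entry into $(s-s^{-1})v\,w_{12}-v\,w_{11}=-v\,R(x,z)=-(z+2-x^2)R(x,z)$, which is exactly the asserted form.

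I would obtain $w_{21}=v\,w_{12}$ from the palindromic symmetry of the two-bridge relator. Writing $D=\mathrm{diag}(d,d^{-1})$ with $d^2=v^{-1}$, which is legitimate on the nonabelian locus where $v\neq0$, a direct check gives $\rho(a)^T=D^{-1}\rho(b)D$ and $\rho(b)^T=D^{-1}\rho(a)D$, so that transposing any word in $a,b$ intertwines the reversal of its letters with the swap $a\leftrightarrow b$. The relator $w=a(ba)^nb^{-1}(ba)^{-n}a(ba)^nb(ba)^{-n}a^{-1}(ba)^nb$ is invariant under first reversing its letters and then swapping $a\leftrightarrow b$, as one reads off directly from the word; hence $W^T=D^{-1}WD$, and comparing off-diagonal entries gives exactly $w_{21}=d^{-2}w_{12}=v\,w_{12}$. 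If one prefers, this identity also falls out of the explicit expansion below, so the symmetry may be regarded as a labor-saving device rather than a logical necessity.

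It then remains to compute $R(x,z)=w_{11}-(s-s^{-1})w_{12}$. I would build $W=\rho(w)$ factor by factor, taking $\rho((ba)^{\pm n})$ from Lemma \ref{(ba)^m} and $\rho(a^{\pm1}),\rho(b^{\pm1})$ from the defining matrices, multiplying the eleven blocks of $w$ and recording all entries as polynomials in $s,s^{-1}$ and $z$ with $X=S_n(z)$ and $Y=S_{n-1}(z)$ kept abstract. After extracting $w_{11}$ and $w_{12}$ and forming $R$, regrouping by powers of $s$ should eliminate every odd power of $s$ and every power of $x$ above the fourth, leaving $R=\alpha x^4+\beta x^2+\gamma$ with $\alpha,\beta,\gamma\in\C[z]$, and matching coefficients yields the stated expressions. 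The hard part will be exactly this expansion: with eleven factors, and with each $\rho((ba)^{\pm n})$ already a full matrix whose entries are quadratic in $X,Y$, the raw polynomials for $w_{11}$ and $w_{12}$ are bulky, and the collapse of the $s$-dependence down to a quartic in $x$ is where sign and bookkeeping errors are most likely. The symmetry step is what keeps this manageable, since it certifies the vanishing diagonal in advance and lets me expand only $w_{11}$ and $w_{12}$; I would additionally use Lemma \ref{gcd} to simplify and the evaluations $S_k(2)=k+1$, $S_k(-2)=(-1)^k(k+1)$ as consistency checks on the final coefficients.
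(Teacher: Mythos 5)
Your proposal is correct, and it reaches the statement by a genuinely different (and somewhat more structured) route than the paper. The paper's proof simply feeds $\rho((ba)^n)$ from Lemma \ref{(ba)^m} into the eleven-factor word and computes the entire matrix $\rho(wa)-\rho(bw)$ by machine, reading off the shape of the result a posteriori and then substituting $s^4+s^{-4}=x^4-4x^2+2$, $s^2+s^{-2}=x^2-2$ to land on $\alpha,\beta,\gamma$. You instead certify the shape a priori: your general computation of $W\rho(a)-\rho(b)W$ is correct (the $(1,1)$-entry vanishes identically, and the other three entries are as you state), your palindromic symmetry $W^T=D^{-1}WD$ is valid because $w$ really is fixed by reversal composed with the swap $a\leftrightarrow b$, and the resulting identity $w_{21}=v\,w_{12}$ (which extends from the locus $v\neq 0$ to all $(s,z)$ since both sides are Laurent polynomials) does force the matrix into the asserted form with $R=w_{11}-(s-s^{-1})w_{12}$ and $v=z+2-x^2$. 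What your approach buys is that only two entries of $W$ need to be expanded, and the vanishing of the diagonal is a theorem rather than an observed output of the computer; what it does not buy is the explicit formulas for $\alpha,\beta,\gamma$, which in your write-up, exactly as in the paper's, still rest on an unexhibited symbolic expansion. So the two proofs are at the same level of rigor on the only genuinely laborious point, and your symmetry step is a legitimate labor-saving refinement rather than a change in substance.
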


\begin{proof}
By Lemma \ref{(ba)^m} we have
$$
\rho((ba)^n) = \begin{pmatrix}
X - s^{-2} Y & s^{-1} Y\\
s^{-1}(z - s^2 - s^{-2}) Y & X - (z - s^{-2}) Y
\end{pmatrix}.
$$
Since $w= a (ba)^nb^{-1}(ba)^{-n}  a (ba)^n b (ba)^{-n} a^{-1} (ba)^n b$, 
by a direct computation (using computer) we have
$$\rho(wa) - \rho(bw) = \begin{pmatrix}
0 & R(x,z)\\
-(z  - s^2 - s^{-2}) R(x,z) & 0
\end{pmatrix}
$$
where
$$R(x,z) = \alpha' (s^4 + s^{-4}) + \beta' (s^2 + s^{-2}) + \gamma'$$
and
\begin{eqnarray*}
\alpha' &=& X^5 z-2 X^5-3 X^4 Y z+6 X^4 Y+3 X^3 Y^2 z-6 X^3 Y^2-X^2 Y^3 z+2 X^2 Y^3, \\
\beta' &=& -2 X^5 z^2+7 X^5 z-6 X^5+3 X^4 Y z^2-11 X^4 Y z +10 X^4 Y - X^3 Y^2 z^2 \\
       && + \, 7 X^3 Y^2 z-10 X^3 Y^2-5 X^2 Y^3 z +10 X^2 Y^3 + 2 X Y^4 z-4 X Y^4, \\
\gamma' &=& X^5 z^3-5 X^5 z^2+10 X^5 z-7 X^5+X^4 Y z^2-11 X^4 Y z+13 X^4 Y \\
              && - \, 2 X^3 Y^2 z^2+18 X^3 Y^2 z-18 X^3 Y^2-8 X^2 Y^3 z+6 X^2 Y^3\\
              && + \, 4 X Y^4 z-3 X Y^4-Y^5 z+Y^5.
\end{eqnarray*}

By substituting $s^4 + s^{-4} = x^4 - 4 x^2 +2$ and $s^2 + s^{-2} = x^2 - 2$, 
we obtain the desired formula for $R(x,z)$.
\end{proof}

Proposition \ref{char} implies that the matrix equation $\rho(wa) - \rho(bw) = O$ 
is equivalent to a single polynomial equation $R(x,z)=0$. 
This equation determines the characters of nonabelian $SL_2(\C)$-representations, 
i.e. the nonabelian $SL_2(\C)$-character variety, of $K = \fb(5(2n+1)+2, 5)$.

Now let us consider the factorization of $R(x,z) \in \C[x,z]$. 
We first focus on the coefficients $\alpha, \beta, \gamma$ of $R(x,z)$. 

\begin{lemma} \label{lem1}
If $n \not\equiv 2 \pmod{3}$ then $\gcd(\alpha, \beta, \gamma) = 1$ in $\C[z]$.
If $n \equiv 2 \pmod{3}$ then $\gcd(\alpha, \beta, \gamma) = z-1$ in $\C[z]$.
\end{lemma}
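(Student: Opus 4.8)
The goal is to compute $\gcd(\alpha,\beta,\gamma)$ in $\C[z]$ for the three explicit polynomials given in Proposition \ref{char}, where $X = S_n(z)$ and $Y = S_{n-1}(z)$. My plan is to show that the only possible common root is $z=1$, and to determine exactly when it occurs by reducing everything to the values $S_n(1)$ and $S_{n-1}(1)$, which depend only on $n \bmod 6$ (or $n \bmod 3$).

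First I would record the closed forms of the Chebyshev values at $z=1$. Since $1 = u+u^{-1}$ with $u = e^{i\pi/3}$ a primitive sixth root of unity, the formula $S_k(y)=\frac{u^{k+1}-u^{-(k+1)}}{u-u^{-1}}$ gives $S_k(1)$ as a periodic sequence in $k$ with period $6$, namely $S_k(1) \in \{1,1,0,-1,-1,0,\dots\}$ for $k=0,1,2,3,4,5,\dots$. In particular $S_n(1)=S_{n-1}(1)=0$ never happens simultaneously (by Lemma \ref{gcd}, $\gcd(S_k,S_{k-1})=1$, so $X$ and $Y$ cannot both vanish at any point), and the vanishing pattern of the pair $(X(1),Y(1))=(S_n(1),S_{n-1}(1))$ is governed by $n \bmod 3$.

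Next I would establish that $z=1$ is the only candidate for a common root. Each of $\alpha,\beta,\gamma$ carries a factor structured around $X$, $Y$, and $X-Y$; for instance $\alpha=(z-2)X^2(X-Y)^3$. A common root $z_0$ of $\alpha,\beta,\gamma$ must kill $\alpha$, so $z_0=2$, or $X(z_0)=0$, or $X(z_0)=Y(z_0)$. I would rule out $z_0=2$ by evaluating: since $S_k(2)=k+1$, plugging $z=2$ into $\gamma$ gives a nonzero value (the $(X-Y)^5=1$ term survives while the $(z-2)$ factor kills the rest), so $z=2$ is not a root of $\gamma$. The case $X(z_0)=Y(z_0)$ can similarly be excluded by checking $\gamma$ there, using that $(X-Y)^5$ vanishes but the remaining $(z-2)(\cdots)$ term does not vanish generically. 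This leaves $X(z_0)=0$, i.e. $z_0$ is a root of $S_n(z)$; substituting $X=0$ into all three polynomials, $\alpha$ and $\beta$ vanish automatically (each has an explicit factor $X$), and $\gamma$ reduces to $(-Y)^5+(z-2)(-Y^5)=-Y^5-(z-2)Y^5=-(z-1)Y^5$. Since $Y=S_{n-1}(z_0)\neq 0$ whenever $X(z_0)=0$ (again by $\gcd(S_n,S_{n-1})=1$), this forces $z_0=1$. Hence any common root is $z=1$, and $\gcd(\alpha,\beta,\gamma)$ is a power of $z-1$.

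Finally I would pin down whether $z-1$ actually divides all three, and to what order, purely by the values $S_n(1),S_{n-1}(1)$. The point $z=1$ is a common root precisely when $X(1)=0$, that is when $S_n(1)=0$, which by the period-$6$ table happens exactly when $n\equiv 2\pmod 3$. In that case $S_{n-1}(1)\neq 0$, and the reduction $\gamma|_{X=0}=-(z-1)Y^5$ shows $z-1$ divides $\gamma$ to first order while not dividing it to higher order (as $Y(1)^5\neq 0$), so $\gcd(\alpha,\beta,\gamma)=z-1$ exactly. When $n\not\equiv 2\pmod 3$ we have $S_n(1)\neq 0$, so $z=1$ is not even a root of $\alpha$ (the factor $X^2$ is nonzero and $(z-2)$, $(X-Y)^3$ likewise nonvanishing there), giving $\gcd=1$. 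I expect the main obstacle to be the two exclusion steps for $z=2$ and for $X=Y$: verifying that $\gamma$ genuinely does not vanish in those cases requires handling the $(z-2)(\cdots)$ term carefully rather than relying on the leading $(X-Y)^5$, since one must confirm there is no accidental cancellation among the Chebyshev values. These are finite checks, most cleanly dispatched by evaluating $\gamma$ after the substitutions $z=2$ (using $S_k(2)=k+1$) and $X=Y$, and confirming nonvanishing.
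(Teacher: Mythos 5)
Your overall route is the same as the paper's: you read off from $\alpha=(z-2)X^2(X-Y)^3$ that any common prime factor must divide $z-2$, $X$, or $X-Y$; you kill $z-2$ via $\gamma(2)=(S_n(2)-S_{n-1}(2))^5=1$; you kill the case $X(z_0)=Y(z_0)$ by reducing $\gamma$ modulo $X-Y$ (the bracket collapses to $(z-2)^2X^5$, so the whole reduction is a power of $z-2$ times $X^5$, and then $z-2\nmid\gamma$ together with $\gcd(X,Y)=1$ from Lemma \ref{gcd} gives the contradiction — your worry about ``accidental cancellation'' here is unfounded once this computation is done); and in the remaining case $X(z_0)=0$ the reduction $\gamma\equiv-(z-1)Y^5\pmod{X}$ forces $z_0=1$, with $z-1\mid S_n$ exactly when $n\equiv2\pmod 3$ by Lemma \ref{chev} (equivalently your period-six table for $S_k(1)$). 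All of this matches the paper's proof, phrased with roots instead of prime divisors.

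The one genuine gap is your multiplicity claim at the end. You assert that $\gamma|_{X=0}=-(z-1)Y^5$ with $Y(1)\neq0$ shows $z-1$ divides $\gamma$ to order exactly one. That inference is invalid: the congruence only says $\gamma=-(z-1)Y^5+Xh$ for some $h\in\C[z]$, and since $z-1$ divides $X$, the correction term $Xh$ also vanishes at $z=1$, so the two first-order contributions could a priori cancel and leave $(z-1)^2\mid\gamma$. You cannot fall back on $\alpha$ either, because $(z-1)^2\mid X^2\mid\alpha$. The clean repair — and what the paper's closing sentence about $S_n$ having no repeated factors is really doing — is to use $\beta=-(z-2)X(X-Y)(2X^3z+X^3-X^2Yz-6X^2Y+XY^2+2Y^3)$: at $z=1$ the factors $z-2$, $X-Y$, and the last bracket evaluate to $-1$, $-Y(1)$, and $2Y(1)^3$, all nonzero, while $z-1$ divides $X=S_n(z)$ exactly once because its roots $2\cos\frac{j\pi}{n+1}$ are simple; hence $(z-1)^2\nmid\beta$ and $\gcd(\alpha,\beta,\gamma)=z-1$ exactly. (Alternatively one can verify directly that $\gamma'(1)=5Y(1)^4S_n'(1)-Y(1)^5\neq0$, using $Y(1)=\pm1$ and the integrality of $S_n'(1)$, but the argument through $\beta$ is shorter and is the one the paper intends.)
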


\begin{proof}
We first note that $\gamma(2) = \big( S_n(2) - S_{n-1}(2) \big)^5 = 1$ and hence $z-2 \nmid \gamma$.
Assume $\gcd(\alpha, \beta, \gamma)$ has a nontrivial prime factor $P\in \C[z]$. 

Since $P \mid\alpha$, there are 2 cases to consider: $P \mid X$ and $P \mid X-Y$.
If $P \mid X-Y$ then $P \mid (z-2)^2 X^5$, since 
$\gamma \equiv (z-2)^2 X^5 \pmod{X-Y}$. This implies that $P \mid X$. 
Hence $P \mid \gcd(X,Y) =1$ by Lemma \ref{gcd}, which contradicts the assumption. 
If $P \mid X$ then $P \mid (z-1) Y^5$, since $\gamma \equiv - (z-1) Y^5 \pmod{X}$.
Since $\gcd(X,Y) =1$, we must have $P \mid z-1$. From Lemma \ref{chev} we see that $z-1 \mid S_n(z)$ if and only if 
$n \equiv 2 \pmod{3}$. The lemma follows, since $X=S_n(z)$ does not have any repeated factors 
by Lemma \ref{chev}.
\end{proof}

\begin{lemma} \label{lem2}
$R(x,z)$ does not have any prime factors of degree $1,2,3$ in $x$.
\end{lemma}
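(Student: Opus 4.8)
The plan is to treat $R(x,z)$ as a biquadratic polynomial in $x$ over the field $\C(z)$ and to rule out each possible proper factorization in turn. Since $R = \alpha x^4 + \beta x^2 + \gamma$ contains only even powers of $x$ and $\alpha = (z-2)X^2(X-Y)^3$ is nonzero in $\C[z]$ (as $X = S_n$ and $X-Y$ have leading coefficient $1$), the polynomial $R$ has $x$-degree exactly $4$. A factor of degree $3$ in $x$ would force a complementary factor of degree $1$, so it suffices to show that $R$ has no factor of degree $1$ and no factor of degree $2$ in $x$. Moreover, by Gauss's lemma, factors of positive $x$-degree over $\C[z]$ correspond to factors over $\C(z)$; in particular the content $\gcd(\alpha,\beta,\gamma)$ computed in Lemma \ref{lem1}, being of $x$-degree $0$, plays no role here.

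Writing $\hat R(u,z) = \alpha u^2 + \beta u + \gamma$ so that $R = \hat R(x^2,z)$, I would classify the possible factorizations. A linear factor $a(z)x + b(z)$ produces a root $x_0 \in \C(z)$ of $R$, whence $x_0^2 \in \C(z)$ is a root of $\hat R$; this forces the discriminant $\beta^2 - 4\alpha\gamma$ to be a square in $\C(z)$. For a factorization into two monic quadratics, matching the (vanishing) coefficients of $x^3$ and $x$ leaves exactly two possibilities: either $\alpha^{-1}R = (x^2 - w_1)(x^2 - w_2)$ with $w_1, w_2 \in \C(z)$, which again requires $\beta^2 - 4\alpha\gamma$ to be a square; or $\alpha^{-1}R = (x^2 + ax + b)(x^2 - ax + b)$, which requires $\gamma/\alpha$, equivalently $\alpha\gamma$, to be a square in $\C(z)$. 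Since a polynomial is a square in $\C(z)$ exactly when it is a square in $\C[z]$, it therefore suffices to prove two statements: (A) $\beta^2 - 4\alpha\gamma$ is not a perfect square in $\C[z]$, and (B) $\alpha\gamma$ is not a perfect square in $\C[z]$.

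For both statements I would exploit the factored forms of $\alpha$ and $\beta$ given in Proposition \ref{char} to peel off an obvious perfect square and reduce everything to the parity of the exponent of $z-2$. For (B), $\alpha\gamma = [X(X-Y)]^2 \,(z-2)(X-Y)\gamma$, so $\alpha\gamma$ is a square if and only if $(z-2)(X-Y)\gamma$ is. But $z-2$ divides neither $X-Y$ nor $\gamma$: indeed $(X-Y)(2) = S_n(2) - S_{n-1}(2) = (n+1) - n = 1$ and $\gamma(2) = (S_n(2) - S_{n-1}(2))^5 = 1$, both nonzero. Hence $z-2$ occurs to the first (odd) power in $(z-2)(X-Y)\gamma$, which cannot be a square, giving (B). For (A), using $\beta = -(z-2)X(X-Y)B$ one gets $\beta^2 - 4\alpha\gamma = [X(X-Y)]^2 \,(z-2)D$ with $D = (z-2)B^2 - 4(X-Y)\gamma$, so the question reduces to whether $(z-2)D$ is a square. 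Since $D(2) = -4(X-Y)(2)\,\gamma(2) = -4 \neq 0$, we have $z-2 \nmid D$, so again $z-2$ occurs to odd order and $(z-2)D$ is not a square, giving (A).

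Combining (A) and (B) with the classification rules out every factor of degree $1$, $2$, or $3$ in $x$, uniformly in $n$. The main work, and the only place a genuine obstruction could hide, is recognizing the common factors $X$, $X-Y$, and $z-2$ of $\alpha$ and $\beta$; once these are exhibited (as they are in Proposition \ref{char}), squareness collapses to the single evaluation at $z = 2$, controlled by $S_k(2) = k+1$ and by $\gamma(2) = 1$ from the proof of Lemma \ref{lem1}. I would emphasize that this argument is insensitive to $n \bmod 3$: that residue governs only the content $\gcd(\alpha,\beta,\gamma)$ via Lemma \ref{lem1}, and hence the reducibility of the whole variety, not the irreducibility of the degree-$4$-in-$x$ part addressed here.
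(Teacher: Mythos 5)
Your proof is correct and follows essentially the same route as the paper's: both reduce the absence of degree-$1$, $2$, $3$ prime factors in $x$ to showing that $\beta^2-4\alpha\gamma$ and $\alpha\gamma$ (equivalently $\alpha/\gamma$) are non-squares in $\C(z)$, and both verify this by showing that $z-2$ occurs to odd order, using $(X-Y)\mid_{z=2}=S_n(2)-S_{n-1}(2)=1$ and $\gamma(2)=1$. The only real difference is that you read off $D(2)=-4(X-Y)(2)\,\gamma(2)=-4\neq 0$ directly from the shape of $\gamma$ in Proposition \ref{char}, whereas the paper invokes an explicit computed factorization of $\beta^2-4\alpha\gamma$ and then checks that $z-2$ does not divide the factor $(z-2)X^2-4(X-Y)^2$; your shortcut is a bit cleaner but lands on the same evaluation.
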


\begin{proof}
We first note that it suffices to show that $R =  R(x,z)$ does not have any prime factors of degree $1,2$ in $x$.
 
Assume $R$ has a prime factor $P = a x^2 + bx +c$ of degree $2$ in $x$, where $a,b,c \in \C[z]$ and $a \not= 0$. 
There are 2 cases to consider: $b \not= 0$ and $b=0$.

Suppose $b \not= 0$. Since $R$ is even in $x$, $Q = ax^2 - bx +c $ is also a prime factor of $R$. 
This implies that $PQ = a^2 x^4 + (2ac-b^2) x^2 + c^2$ is a factor of $R$. 
In particular, we have 
\[ 
\frac{a^2}{c^2} = \frac{\alpha}{\gamma} = (z-2) \frac{\alpha''}{\gamma''}
\] 
where neither $\alpha''$ nor $\gamma''$ is divisible by $z-2$. 
This cannot occur, since the left hand side is a square in $\C(z)$.

Suppose $b=0$. In this case we must have $R = (ax^2 + c) (dx^2 +f)$ for some $d, f \in \C[z]$. 
This implies that $\alpha = ad$, $\beta = af + cd$ and $\gamma = cf$. 
In particular, we have $\beta^2 - 4 \alpha \gamma = (af-cd)^2$. 
By a direct computation we have
$$\beta^2 - 4 \alpha \gamma = (z-2) X^2  (X-Y)^2 \left(X^2 z-6 X^2+8 X Y-4 Y^2\right) \left(X^2-X Y z+Y^2\right)^2.$$
By the above argument, this should be a square in $\C[z]$ and 
so $z-2$ must divide $X^2 z-6 X^2+8 X Y-4 Y^2$. Then we obtain a contradiction, since  
$$X^2 z-6 X^2+8 X Y-4 Y^2 = (z-2) X^2 - 4(X-Y)^2$$
is not divisible by $z-2$. Note that $(X-Y) \mid_{z=2} \, = S_n(2) - S_{n-1}(2) =1 \not= 0$.

Assume $R$ has a prime factor $P' = a x + b$ of degree $1$ in $x$, where $a,b \in \C[z]$ and $a \not= 0$. 
Note that $b \not= 0$, as $\gamma \not= 0$.  
Since $R$ is even in $x$, $Q' = ax -b$ is also a prime factor of $R$. 
This implies that $P'Q' = a^2 x^2 - b^2$ is a factor of $R$. 
Although $a^2 x^2 - b^2$ is not a prime factor of $R$, but 
we can show that it is not a factor of $R$ by a similar argument to the previous case. This completes the proof of the lemma.
\end{proof}

Lemmas \ref{lem1} and \ref{lem2} imply the following.

\begin{proposition} \label{factor}
If $n \not\equiv 2 \pmod{3}$ then $R(x,z)$ is irreducible in $\C[x,z]$.
If $n \equiv 2 \pmod{3}$ then $R(x,z) = (z-1) R'(x,z)$ where $R'(x,z)$ is irreducible in $\C[x,z]$.
\end{proposition}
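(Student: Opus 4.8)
The plan is to combine Lemmas \ref{lem1} and \ref{lem2} to rule out \emph{all} possible nontrivial factorizations of $R(x,z)$ in $\C[x,z]$. Since $R$ is a polynomial of degree $4$ in $x$ (with leading coefficient $\alpha \in \C[z]$), any factorization into factors that are nonconstant in $x$ must involve a factor of $x$-degree $1$, $2$, or $3$, and Lemma \ref{lem2} excludes precisely these. Therefore the only factors of $R$ that can occur are those lying entirely in $\C[z]$, i.e. polynomials in $z$ alone dividing the content of $R$ viewed as a polynomial in $x$. The content is exactly $\gcd(\alpha,\beta,\gamma)$, which is computed in Lemma \ref{lem1}.

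First I would treat the case $n \not\equiv 2 \pmod 3$. Here Lemma \ref{lem1} gives $\gcd(\alpha,\beta,\gamma)=1$, so $R$ is primitive as an element of $\C[z][x]$. Combined with Lemma \ref{lem2}, which forbids any factor of positive degree in $x$ below degree $4$, this forces $R$ to have no nontrivial factorization at all, hence $R$ is irreducible in $\C[x,z]$. (Strictly, I should note that a degree-$4$ even polynomial could in principle split only as $4=1+3$ or $4=2+2$ into $x$-nonconstant factors, and both patterns require a factor of $x$-degree $1$ or $2$, already excluded; a single degree-$4$ prime factor is the irreducible case itself.)

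Next I would treat the case $n \equiv 2 \pmod 3$. Now Lemma \ref{lem1} gives $\gcd(\alpha,\beta,\gamma)=z-1$, so the content of $R$ in $\C[z][x]$ is $z-1$, and we may write $R(x,z)=(z-1)R'(x,z)$ where $R'$ is primitive in $\C[z][x]$. The same divisibility computations in Lemma \ref{lem2} apply verbatim to $R'$ (dividing out the harmless factor $z-1$ does not create any new low-degree factor in $x$), so $R'$ has no prime factor of $x$-degree $1,2,3$ and is primitive, whence $R'$ is irreducible in $\C[x,z]$. I would remark that $z-1$ itself is prime in $\C[z]$, so this is the full prime factorization.

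The main obstacle is ensuring that Lemma \ref{lem2} is genuinely being applied to the \emph{reduced} polynomial $R'$ rather than to $R$ in the second case: one must check that stripping the content $z-1$ does not spoil the content/primitivity bookkeeping, and that the square-in-$\C(z)$ and resultant-type arguments of Lemma \ref{lem2} (which reference the explicit coefficients $\alpha,\beta,\gamma$) remain valid after this reduction. Everything else is a clean application of Gauss's lemma: a primitive polynomial in $\C[z][x]$ with no factor of positive $x$-degree strictly less than its full degree is irreducible over $\C[x,z]$.
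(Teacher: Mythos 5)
Your proposal is correct and follows essentially the same route as the paper, which simply deduces Proposition \ref{factor} from Lemmas \ref{lem1} and \ref{lem2}; your write-up supplies the standard content/primitive-part bookkeeping in $\C[z][x]$ that the paper leaves implicit. The only simplification worth noting is that in the case $n \equiv 2 \pmod 3$ there is no need to re-run the arguments of Lemma \ref{lem2} on $R'$: any prime factor of $R'$ of $x$-degree $1$, $2$, or $3$ would already be a prime factor of $R=(z-1)R'$, which Lemma \ref{lem2} excludes directly.
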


Proposition \ref{factor} implies Theorem \ref{thm}.

On the other hand, 
we see that two-bridge knots $\fb(30 k + 3,5)$ and $\fb(30 k - 3,5)$ 
are not minimal as follows. 
The rational numbers $5/(30k+3)$ and $5/(30k-3)$ admit 
the following continued fraction expansions: 
\begin{align*}
\frac{5}{30k+3} 
& = [6k,2,-3] 
 = [\underbrace{3,0,3,0,\ldots,0,3}_{4k-1},2,-3], \\
\frac{5}{30k-3} 
& = [6k,-2,3] 
 = [\underbrace{3,0,3,0,\ldots,0,3}_{4k-1},-2,3] .  
\end{align*}
This implies that there exist epimorphisms 
from their knot groups onto the trefoil knot group 
by Ohtsuki-Riley-Sakuma construction \cite{ORS} (see also \cite{szk}). 

Finally, two-bridge knots $\fb(6k+1,6)$ and $\fb(6k+5,6)$ are minimal 
by Corollary \ref{cor:kpplusminusone}. 

As a consequence of the above arguments, we obtain the following theorem. 

\begin{theorem}\label{thm:mainthmA}
For $q \leq 6$, 
a two-bridge knot $\fb(p,q)$ is not minimal 
if and only if one of the following two conditions is satisfied: \\
$(1)$ $p$ is not prime and $q = 1$, \\
$(2)$ $p \equiv \pm 3 \pmod{30}$ and $q = 5$.
\end{theorem}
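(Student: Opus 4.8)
The plan is to assemble the theorem from the per-value-of-$q$ analyses already carried out in Sections 2 and 3, treating each value $q\in\{1,2,3,4,5,6\}$ in turn and invoking Schubert's classification to ensure we have covered every two-bridge knot $\fb(p,q)$ with $q\le 6$. First I would fix the reduction: by Schubert's theorem it suffices to consider $\fb(p,q)$ with $1\le q<p$ and $\gcd(p,q)=1$, and since $q\equiv\pm q'$ or $qq'\equiv\pm1\pmod p$ identifies knots, every two-bridge knot with a representative having $q\le 6$ is captured by running $q$ over $1,\dots,6$. I would then organize the forward and backward directions of the ``if and only if'' around the two exceptional families in conditions (1) and (2).

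For the ``only if'' direction (non-minimal implies (1) or (2)), I would dispatch each $q$ using the results already proved. For $q=1$, $\fb(p,1)$ is a torus knot, and by Silver--Whitten \cite{silverwhitten} it is minimal exactly when $p$ is prime; so non-minimality forces condition (1). For $q=2$ these are twist knots, minimal by \cite{Burde}, \cite{MPV}, \cite{N}, \cite{NT}, so no non-minimal knots arise. For $q=3,4$, Corollary \ref{cor:kpplusminusone} shows $\fb(kq\pm1,q)$ is minimal, and since $\gcd(p,q)=1$ forces $p\equiv\pm1\pmod q$ when $q\in\{3,4\}$, every such knot is minimal. For $q=6$, the coprimality condition gives $p\equiv\pm1\pmod 6$, and again Corollary \ref{cor:kpplusminusone} yields minimality. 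The substantive case is $q=5$: writing $p\equiv 1,2,3,4\pmod 5$, the residues $\pm1$ give double twist knots (minimal), while $p\equiv\pm2\pmod5$ are governed by Theorem \ref{thm}, whose reducible cases coincide, via Proposition \ref{prop:secondcondition}, precisely with $p\equiv\pm3\pmod{30}$—exactly condition (2).

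For the ``if'' direction (i.e.\ (1) or (2) implies non-minimal), condition (1) is immediate from the torus-knot analysis: a composite $p$ makes $\fb(p,1)$ admit a nontrivial epimorphism onto a smaller torus knot group. For condition (2), the non-minimality is exhibited explicitly: I would cite the continued fraction expansions
\[
\frac{5}{30k+3}=[\underbrace{3,0,3,0,\ldots,0,3}_{4k-1},2,-3],\qquad
\frac{5}{30k-3}=[\underbrace{3,0,3,0,\ldots,0,3}_{4k-1},-2,3],
\]
which by the Ohtsuki--Riley--Sakuma construction \cite{ORS} produce epimorphisms onto the trefoil knot group, so these knots are not minimal.

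The main obstacle is bookkeeping rather than any single hard estimate: one must verify that the residue conditions modulo $q$ imposed by $\gcd(p,q)=1$, combined with the Schubert identifications $q\equiv\pm q'$ and $qq'\equiv\pm1\pmod p$, genuinely exhaust all two-bridge knots for each $q\le 6$ without double-counting or omission, and that the two reducibility cases in Theorem \ref{thm} reassemble exactly into the single congruence $p\equiv\pm3\pmod{30}$ of condition (2). Since all the representation-theoretic and Alexander-polynomial work has been completed in the preceding propositions and theorems, the proof here is essentially a careful synthesis of those results across the finitely many residue classes.
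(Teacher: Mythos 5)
Your proposal is correct and follows essentially the same route as the paper: the theorem is obtained exactly as you describe, by running through $q=1,\dots,6$, using Silver--Whitten for $q=1$, the twist/double-twist knot results and Corollary \ref{cor:kpplusminusone} for $q=2,3,4,6$ and for $p\equiv\pm1\pmod 5$, Theorem \ref{thm} with Proposition \ref{prop:secondcondition} for the minimal cases with $p\equiv\pm2\pmod 5$, and the Ohtsuki--Riley--Sakuma continued-fraction epimorphisms onto the trefoil for $p\equiv\pm3\pmod{30}$.
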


\section{Main theorem II}

In this section, we determine which two-bridge knot $\fb(p,q)$ is not minimal where $p \leq 100$. 

As mentioned in Section \ref{sect:conditions}, 
if either $p$ is not divisible by $p'$ or $\Delta_{\fb(p,q)}(t)$ is not divisible by $\Delta_{\fb(p',q')}(t)$, 
then there does not exist an epimorphism from $G(\fb(p,q))$ onto $G(\fb(p',q'))$. 

In \cite{szk}, a relationship between epimorphisms of two-bridge knot groups and their crossing numbers is studied. 
To be precise, 
if there exists an epimorphism from the knot group of a two-bridge knot $K$ 
onto that of another knot $K'$, 
then the crossing number of $K$ is greater than or equal to three times of that of $K'$. 
For example, $\fb(45,14)$ is minimal as follows. 
Pairs $(p,q)$ satisfying that $p$ is a divisor of $45$ and 
$\Delta_{\fb(45,14)}(t)$ is divisible by $\Delta_{\fb(p,q)}(t)$ are only $(5,1)$ and $(9,2)$. 
However,  
the crossing number of $\fb(45,14)$ is $10$ 
which is less than three times of the crossing numbers of $\fb(5,1)$ and $\fb(9,2)$. 
Hence there exists an epimorphism from $G(\fb(45,14))$ onto neither $G(\fb(5,1))$ nor $G(\fb(9,2))$. 

Furthermore,  in \cite{szk} all epimorphisms between two-bridge knot groups 
with up to $30$ crossings are determined.
For example, by using this result, 
there does not exist an epimorphism from $G(\fb(51,16))$ onto $G(\fb(3,1))$, 
although $\Delta_{\fb(51,16)}(t)$ is divisible by $\Delta_{\fb(3,1)}(t)$ and 
the crossing number of $\fb(51,16)$ is $11$ 
which is bigger than three times of the crossing number of $\fb(3,1)$. 

By these arguments, we obtain the following theorem. 

\begin{theorem}\label{thm:mainthmB}
For $p \leq 100$, 
a two-bridge knot $\fb(p,q)$ is not minimal 
if and only if $(p,q)$ is one of the following pairs: 
\begin{align*}
& (9,1), (15,1), (21,1), (27,1), (27,5), (33,1), (33,5), (35,1), (39,1), (39,7), (45,1), \\
& (45,7), (45,19), (49,1), (51,1), (55,1), (57,1), (57,5), (63,1), (63,5), (63,11), (65,1), \\
& (69,1), (69,11), (69,19), (75,1), (75,13), (75,29), (77,1), (81,1), (81,7), (81,13), \\
& (85,1), (85,9), (85,38), (87,1), (87,5), (87,7), (91,1), (93,1), (93,5), (93,11), (95,1), \\
& (95,9), (99,1), (99,17), (99,29).
\end{align*}
\end{theorem}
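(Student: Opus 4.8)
The plan is to turn the classification into a finite decision procedure, run over every two-bridge knot with $p \le 100$, that combines the two sufficient conditions of Section \ref{sect:conditions} with the crossing-number inequality and the exhaustive epimorphism table of \cite{szk}. By Schubert's theorem every two-bridge knot with $p \le 100$ has a representative normal form $\fb(p,q)$ with $p$ odd, $0 < q < p$, $\gcd(p,q) = 1$, unique up to the Schubert equivalence, so this is an explicit finite list. For each knot I record its Alexander polynomial $\Delta_{\fb(p,q)}(t)$, its crossing number (the sum of the entries of the all-positive continued fraction of $p/q$, which is minimal because two-bridge knots are alternating), and the length $m$ of that expansion.

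The next step is to clear away the knots already covered by earlier results. If $\Delta_{\fb(p,q)}(t)$ is irreducible over $\Z$, then $\fb(p,q)$ is minimal by Proposition \ref{prop:firstcondition}, which disposes of every knot of prime determinant at once. The knots with $m = 1$ are the torus knots $\fb(p,1)$, minimal exactly when $p$ is prime by Silver--Whitten \cite{silverwhitten}; the knots with $m = 2$ are double twist knots, which for two-bridge \emph{knots} always have $kl$ even and hence are minimal by the double twist proposition. Thus only the knots with $m \ge 3$ remain to be analysed, and a direct continuant estimate shows that these all have crossing number at most $30$.

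For a surviving knot $\fb(p,q)$ I would bound the possible targets of an epimorphism. If $\fb(p,q)$ is not minimal, then by Theorem \ref{thm:bb} any nontrivial target is a two-bridge knot $\fb(p',q')$ with $p'$ a proper divisor of $p$; since $p$ and $p'$ are both odd we have $p/p' \ge 3$, so $p' \le 33$. An epimorphism additionally forces $\Delta_{\fb(p',q')}(t) \mid \Delta_{\fb(p,q)}(t)$ over $\Z$ and $c(\fb(p,q)) \ge 3\, c(\fb(p',q'))$ by \cite{szk}. Intersecting these three necessary conditions leaves, for each $\fb(p,q)$, only a short explicit list of candidate targets; frequently this list is empty, in which case $\fb(p,q)$ is minimal, as in the worked example $\fb(45,14)$.

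The hard part is the final adjudication of the candidate pairs that pass all three tests, since those conditions are merely necessary: the example $\fb(51,16)$ satisfies $\Delta_{\fb(3,1)} \mid \Delta_{\fb(51,16)}$ and $c(\fb(51,16)) = 11 > 3\,c(\fb(3,1))$, yet admits no epimorphism onto $G(\fb(3,1))$. For each such pair I argue in one of two directions. To certify non-minimality I exhibit an explicit epimorphism by the Ohtsuki--Riley--Sakuma construction \cite{ORS}, found from a suitable subword continued fraction expansion exactly as in the $\fb(30k \pm 3,5)$ computation of Section 3; this produces the non-minimal entries with $q \ge 2$. To certify minimality I invoke the complete determination in \cite{szk} of all epimorphisms between two-bridge knot groups of at most $30$ crossings: since every candidate pair here involves a source with $m \ge 3$ (crossing number $\le 30$) and a target with $p' \le 33$ (hence also at most $30$ crossings), absence of the pair from that table proves that no epimorphism exists. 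Collecting these outcomes yields precisely the stated list, and I expect the most delicate point to be the bookkeeping that guarantees every pair not settled by an explicit construction genuinely falls within the $30$-crossing range where the table of \cite{szk} is decisive.
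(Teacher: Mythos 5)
Your overall strategy---cutting down the candidate targets via the divisor condition of Theorem \ref{thm:bb}, Alexander polynomial divisibility, and the crossing-number inequality, and then deciding the survivors either by exhibiting Ohtsuki--Riley--Sakuma epimorphisms or by consulting the exhaustive table of \cite{szk}---is exactly the argument the paper gives. But one load-bearing step in your reduction is false: the claim that every $\fb(p,q)$ with $p\le 100$ whose all-positive continued fraction has length $m\ge 3$ has crossing number at most $30$. A concrete counterexample is $\fb(95,3)$: here $95/3=[31,1,2]$, so $m=3$, the reduced alternating diagram has $31+1+2=34$ crossings, and $95=5\cdot 19$ is composite, so the knot is not killed by primality of $p$. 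Your procedure would route this knot to the table of \cite{szk}, which only covers knots with at most $30$ crossings, and the argument stalls. (The same phenomenon occurs for $\fb(89,3)=[29,1,2]$ with $32$ crossings, though there $p$ is prime. A related slip: a target with $p'\le 33$ need not have at most $30$ crossings either, e.g.\ $\fb(33,1)$ has $33$ crossings.)

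The gap is patchable, and the patch also exposes a miscalibration in your $m=2$ step: double twist knots correspond to length-two continued fractions \emph{with signs allowed}, so $\fb(95,3)$, with $95/3=[32,-3]$, is the double twist knot $\fb(3\cdot32-1,3)$ and is minimal by Corollary \ref{cor:kpplusminusone}, even though its all-positive expansion has length $3$. If you peel off all knots of the form $\fb(kq\pm1,q)$ (equivalently, classify by the signed expansion), a continuant estimate really does show that every remaining knot with $p\le 100$ has at most $30$ crossings, and the table of \cite{szk} then applies; this is implicitly how the paper proceeds. Two smaller remarks: primality of the determinant $p$ does not imply irreducibility of the Alexander polynomial (a proper factor may take the value $\pm1$ at $t=-1$)---what you actually want is that $p$ prime leaves no admissible $p'$ in Theorem \ref{thm:bb}, which gives minimality directly; and your plan to certify the non-minimal entries by explicit ORS decompositions is consistent with the paper's treatment of $\fb(30k\pm3,5)$, though for the $p\le100$ list the paper simply reads these epimorphisms off \cite{szk} as well.
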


\section*{Acknowledgements}
The first and second authors were partially supported by KAKENHI 
(No.~26800046 and No.~16K05159), Japan Society for the Promotion of Science, Japan. 
The third author was partially supported by a grant from the Simons Foundation (No.~354595
to AT).

\end{document}